\documentclass[review,3p,times,sort&compress]{elsarticle}
\usepackage{amssymb}
\usepackage{tikz}
\usepackage{graphicx,fancyhdr}
\usepackage{graphicx}
\usepackage{subcaption}
\usepackage{multirow}
\usepackage{dsfont}
\usepackage{bbm}
\usepackage{booktabs}
\usepackage{amsthm}
\usepackage{bbm,bm}
\DeclareMathAlphabet{\mathpzc}{OT1}{pzc}{m}{it}
\usepackage{bbm}
\usepackage{dsfont}
\usepackage{yfonts}
\usepackage{amsmath}
\usepackage{amssymb}
\usepackage{amsfonts}
\usepackage{mathrsfs,float}
\usepackage{color}
\usepackage[colorlinks, citecolor=blue]{hyperref}

\makeatletter

\journal{}

\begin{document}
	
	\begin{frontmatter}
		
		
		\newtheorem{theorem}{Theorem}[section]
		\newtheorem{remark}[theorem]{Remark}
		\newtheorem{ex}{Example}
		\newtheorem{case}{Case}
		\newtheorem{pro}[theorem]{Proposition}
		\newtheorem{defi}[theorem]{Definition}
		\newtheorem{ass}{Assumption}
		\newtheorem{lemma}[theorem]{Lemma}
	\newtheorem{corollary}[theorem]{Corollary}
\newtheorem{assumption}[theorem]{Assumption}
		\newproof{pf}{Proof}
		\newproof{pot}{Proof of Theorem \ref{thm2}}
		\title{A Patchwise Local Fourier Extension Method for Function Approximation on General Two-Dimensional Domains}
			\author[1]{Zhenyu Zhao}
	\author[2]{Yanfei Wang}
		\address[1]{School of Mathematics and Statistics, Shandong University of Technology, Zibo, 255049, China}
\address[2]{Key Laboratory of Deep Petroleum Intelligent Exploration and Development,Institute of Geology and Geophysics, Chinese Academy of Sciences, Beijing, 100029, China}
\begin{abstract}
We propose a patchwise local Fourier extension method for approximating
smooth functions on general two dimensional domains with curved boundaries.
The domain is embedded into a Cartesian background grid and decomposed into
rectangular interior patches and one-side curved trapezoidal boundary patches.
After local data transfer, all patches are converted into fixed-size
tensor-product arrays and approximated by a truncated-SVD stabilized local
Fourier extension procedure.

Unlike global Fourier frame approximations, the proposed method localizes
both the geometry and the ill-conditioned extension process. For fixed local
parameters, the local algebraic operations are performed on fixed-size
systems, and the reference Fourier extension matrices and their singular value
decompositions are reused across patches. Boundary patches require additional
one-dimensional transfer or completion steps, but their costs remain uniformly
bounded by the local resolution. Consequently, the online complexity is
\(O(N)\), where \(N\) denotes the total number of retained output points for
fixed local resolution.

Numerical experiments on smooth curved domains and on a mildly rough boundary
domain demonstrate that the method achieves high accuracy with a fixed set of
local parameters. The smooth-cover correction reduces the boundary-induced
error by several orders of magnitude in the full-domain rough-boundary test,
without changing the underlying scan-based partition.
\end{abstract}
\begin{keyword}
Fourier extension;
local approximation;
function approximation on curved domains;
patchwise method;
curved boundary patches;
TSVD regularization;
scan-based partition
\end{keyword}
		
	\end{frontmatter}
	
\section{Introduction}

High-order approximation of smooth functions on domains with curved
boundaries is a fundamental problem in scientific computing and data
science, e.g., mapping the irregular geometry of active Earth faults, mitigating boundary-induced errors at fault intersections, performing data assimilation in the local Fourier coefficient space rather than the physical grid space, and hence  providing an potential spatial representation tool for complex physical  modelling \cite{Yanfei2011}.
More broadly, it is closely related to spectral and high-order discretizations
of partial differential equations, embedded and fictitious-domain methods,
inverse problems, imaging, and data processing on non-rectangular spatial
regions
\cite{Boyd2002,Boyd2005,BrunoHanPohlman2007,
BrunoLyon2010,LyonBruno2010,MatthysenHuybrechs2018,
AdcockHuybrechs2020}. On tensor-product domains, Fourier and polynomial spectral methods are known
to provide highly accurate and efficient approximations. However, this
advantage is much less straightforward to retain when the physical domain has
a non-rectangular or curved boundary. In this case, the boundary geometry
becomes a central numerical difficulty: standard tensor-product
approximations do not conform naturally to the physical boundary, while
unstructured or mesh-based approaches may introduce additional complications
related to local approximation order, conditioning, and the accurate
representation of the geometry.

A common strategy for approximating functions on non-tensor-product domains
is to embed the physical domain \(\Omega\) into a simple tensor-product
domain \(D\), and to restrict an approximation system on \(D\) to
\(\Omega\). This leads naturally to frame approximations rather than basis
approximations. In the polynomial setting, Adcock and Huybrechs studied
polynomial frame approximation for smooth multivariate functions on
irregular domains, using tensor-product orthogonal polynomials on a bounding
box and samples taken only inside the physical domain
\cite{AdcockHuybrechs2020}. In the Fourier setting, Fourier extension (FE),
or Fourier continuation (FC), approximates nonperiodic functions by Fourier
series defined on an enlarged periodic domain
\cite{Boyd2002,Huybrechs2010,AdcockHuybrechsMartinVaquero2014}. In these
methods, the restricted approximation system is generally redundant and the
associated discrete systems are ill-conditioned. Nevertheless, with
regularization, especially truncated singular value decomposition (TSVD),
stable and accurate approximations can be obtained
\cite{AdcockHuybrechsMartinVaquero2014,AdcockHuybrechs2011,
AdcockHuybrechs2019}.

For two-dimensional irregular domains, Matthysen and Huybrechs developed a
global Fourier frame approximation by restricting a Fourier basis on a
bounding box to samples inside the physical domain
\cite{MatthysenHuybrechs2018}. Their analysis shows that the singular values
of the corresponding collocation matrix cluster near one and zero, with a
relatively small transition set, or plunge region, between them. In two
dimensions, the size of this plunge region is governed mainly by the boundary
geometry rather than by the area of the domain. Consequently, the complexity
of their fast projection algorithm is closely tied to the boundary measure
and, for ordinary non-fractal planar domains, can be reduced from the cubic
cost of a direct solver to \(O(N^2\log^2 N)\)
\cite{MatthysenHuybrechs2018}. This observation is important here: it
suggests that, in Fourier-frame approximation on irregular domains, the main
geometric difficulty is concentrated near the boundary.

Another important line of work is based on explicit continuation near the
boundary. Bruno and coauthors developed high-order Fourier-continuation
techniques for solving partial differential equations in general smooth
domains
\cite{BrunoHanPohlman2007,BrunoLyon2010,LyonBruno2010,AlbinBruno2011}.
More recently, Bruno and Paul proposed a two-dimensional Fourier
Continuation method for constructing biperiodic extensions of smooth
functions on general smooth domains \cite{BrunoPaul2022}. In such approaches,
the main task is to construct smooth continuation data near the boundary so
that the function can be embedded into a periodic rectangular setting and
then processed efficiently by Fourier techniques such as the FFT. These
methods demonstrate the effectiveness of combining Fourier approximation with
careful boundary continuation, but they also show that the treatment of the
boundary is a central and delicate part of Fourier-based methods on curved
domains.

The discussion above suggests two complementary ways of handling irregular
geometries. Global frame methods restrict a tensor-product approximation
system to the physical domain and control the resulting redundancy by
regularization, whereas Fourier-continuation methods construct extension data
so that a periodic representation can be used on a surrounding rectangular
domain. In both cases, the geometric difficulty is strongly tied to the
boundary. This motivates a local alternative: separate the relatively simple
interior regions from the boundary regions, approximate each local piece by a
fixed-size stabilized Fourier extension procedure, and assemble the resulting
local approximations.

Localized Fourier extension strategies provide such a route. In our previous
work \cite{ZhaoWang2026}, a one-dimensional local Fourier extension (LFE) method
was developed for function approximation on intervals. The method partitions
the target interval into subintervals and applies a stabilized Fourier
extension approximation on each local piece. It retains the high accuracy of
Fourier extensions for smooth functions while adapting to local variations of
the target function. The local construction is also modular: once the local
discretization parameters are fixed, the same reference extension matrix and
its truncated singular value decomposition can be reused.

The present paper extends this LFE philosophy to two-dimensional domains
with curved boundaries. The main new difficulty is geometric: a local
partition must handle not only rectangular interior regions, but also curved
boundary regions and transition portions where the dominant boundary
direction changes. We therefore construct a patch system that is compatible
with Cartesian sampling and local tensor-product Fourier extension.

The proposed method embeds the domain into a Cartesian background grid and
uses a boundary scan to generate a local patch database. Interior regions are
treated as rectangular patches, while boundary regions are represented by
one-side curved patches of left, right, top, or bottom type. Transition
regions are covered by additional one-side patches and reduced to the same
four standard boundary types. On rectangular patches, tensor-product LFE is applied directly. On curved patches, boundary
intersections with vertical or horizontal sampling lines are used to form a
fixed-size local data array, so that the same tensor-product LFE
procedure can be used. Thus the geometric information is confined to the
patch construction and data transfer, while the algebraic approximation step
remains uniform across all patches.

This locality gives the method a clear computational advantage. Instead of
forming a single global Fourier frame system, the approximation is decomposed
into local problems of fixed resolution. For rectangular patches, the
tensor-product LFE procedure uses precomputed one-dimensional SVDs directly.
For curved boundary patches, additional one-dimensional LFE transfers are
performed along vertical or horizontal sampling lines; for mildly rough
boundary patches, a one-unknown completion is also carried out on each
sampling line. Since all these operations involve only a fixed number of local
nodes when \(m,n,T\) and the refinement factor are fixed, the online cost is
proportional to the number of patches, or equivalently \(O(N)\) with respect
to the number of retained output points.

The main contributions of this paper are summarized as follows.
\begin{itemize}
\item We develop a patchwise local Fourier extension framework for
two-dimensional domains with curved boundaries. The scan-based construction
reduces the domain to rectangular interior patches and four standard types of
one-side curved boundary patches.

\item We show how curved boundary data can be converted into fixed-size
tensor-product arrays by one-dimensional local Fourier extension transfer.
Consequently, all local patches are processed by the same TSVD-stabilized
tensor-product Fourier extension procedure.

\item We establish a basic error estimate that relates the two-dimensional
local approximation error to one-dimensional directional Fourier extension
defects and to the geometry of the local patch mapping. The estimate clarifies
why local subdivision reduces the effective frequency and the geometric
distortion seen by each patch.

\item We analyze the computational complexity of the method. For fixed local
parameters, the local algebraic operations are performed on fixed-size
systems with reusable SVDs, leading to \(O(N)\) online complexity with respect
to the total number of retained output points.

\item We introduce a smooth-cover correction for boundary patches with
small-scale rough perturbations. The correction constructs a local smooth
cover, recovers one artificial boundary value on each sampling line using a
one-dimensional Fourier extension consistency condition, and then restricts
the final output to the original physical region.
\end{itemize}

Numerical experiments on both smooth and mildly rough domains demonstrate
that the method achieves high accuracy with a fixed set of local parameters.
The full-domain rough-boundary tests further show that the smooth-cover
correction can substantially reduce boundary-induced errors without modifying
the scan-based partition.

The rest of the paper is organized as follows. Section~2 formulates the
patchwise local Fourier extension problem on rectangular and one-side curved
patches. Section~3 gives a basic error estimate and discusses the influence
of local subdivision and geometric mapping. Section~4 calibrates the main
numerical parameters on rectangular patches. Section~5 describes the
approximation procedure on curved trapezoidal patches, including the
smooth-cover correction for mildly rough boundary patches. Section~6 presents
the scan-based patch construction for general domains and analyzes the
computational complexity. Section~7 reports numerical experiments, including
tests on smooth curved domains and on a domain with a mildly rough boundary.
Section~8 concludes the paper with several remarks on possible extensions.

\section{Problem formulation}
\label{sec:problem-formulation}

In this section, we formulate the two-dimensional patchwise local Fourier
extension problem. The aim is to approximate a smooth function on a general
planar domain by reducing the computation to local Fourier extension
approximations on geometrically simple patches.

Let \(\Omega\subset\mathbb{R}^2\) be a bounded planar domain whose boundary
\(\Gamma=\partial\Omega\) can be locally represented by curves. We assume that
\(\Omega\) is contained in a rectangular box
\[
    R=[x_{\min},x_{\max}]\times[y_{\min},y_{\max}],
\]
equipped with a Cartesian background partition. The physical domain is
decomposed, or covered, by a finite collection of local patches
\[
    \mathcal P=\{P_\nu\}_{\nu=1}^{N_p}.
\]
The patches are either rectangular interior patches or boundary-adapted
one-side curved patches. This decomposition reduces approximation on a
general curved domain to a family of local approximation problems for which
tensor-product Fourier extension can be used.

For an interior patch, we take
\[
    P_\nu=[x_L,x_R]\times[y_B,y_T].
\]
For a boundary patch, we assume that, after choosing a suitable local
coordinate direction, the patch has one curved side and three Cartesian
aligned sides. Typical examples are
\[
    P_T=\{(x,y): x_L\le x\le x_R,\ y_B\le y\le \gamma_T(x)\},
\]
\[
    P_B=\{(x,y): x_L\le x\le x_R,\ \gamma_B(x)\le y\le y_T\},
\]
and, after exchanging the roles of \(x\) and \(y\),
\[
    P_L=\{(x,y): y_B\le y\le y_T,\ \gamma_L(y)\le x\le x_R\},
\]
\[
    P_R=\{(x,y): y_B\le y\le y_T,\ x_L\le x\le \gamma_R(y)\}.
\]
Here \(\gamma_T,\gamma_B,\gamma_L,\gamma_R\) denote local representations of
the boundary. Thus each boundary patch contains only one curved side. This
form is convenient for constructing local tensor-product data arrays from
samples along vertical or horizontal Cartesian lines.

We next introduce the local Fourier extension spaces. Let \(T_x>1\) and
\(T_y>1\) be extension parameters, and set
\[
    \Lambda_x=\left[0,\frac{2\pi}{T_x}\right],
    \qquad
    \Lambda_y=\left[0,\frac{2\pi}{T_y}\right].
\]
For nonnegative integers \(N_x\) and \(N_y\), define
\[
    H_{N_x}^{(x)}
    =
    \mathrm{span}\{e^{i\ell t}:|\ell|\le N_x\},
    \qquad
    H_{N_y}^{(y)}
    =
    \mathrm{span}\{e^{iqs}:|q|\le N_y\}.
\]
The tensor-product local Fourier extension space is
\[
    H_{N_x,N_y}
    =
    H_{N_x}^{(x)}\otimes H_{N_y}^{(y)}
    =
    \mathrm{span}\{
        e^{i\ell t}e^{iqs}:
        |\ell|\le N_x,\ |q|\le N_y
    \}.
\]

On each patch \(P_\nu\), the sampled function values are arranged into a
local data matrix
\[
    \mathbf F^{(\nu)}\in\mathbb C^{m_y\times m_x}.
\]
For rectangular patches, this is the usual tensor-product array in the
physical coordinates. For boundary patches, the data are collected along
Cartesian sampling lines and then transferred to a fixed-size tensor-product
array. For example, on a top patch \(P_T\), we use vertical sampling lines
\(x=x_k\), compute the boundary intersections \(\gamma_T(x_k)\), and sample
the function along
\[
    y_B\le y\le \gamma_T(x_k).
\]
The resulting line data are converted to values on a normalized tensor grid
associated with the local coordinates of the curved patch. The bottom, left,
and right patch types are treated in the same way, with the appropriate
choice of sampling direction.

After rescaling the local coordinates to
\(\Lambda_x\times\Lambda_y\), the local approximant takes the form
\[
    F_\nu(t,s)
    =
    \sum_{|\ell|\le N_x}\sum_{|q|\le N_y}
    c_{\ell,q}^{(\nu)} e^{i\ell t}e^{iqs}.
\]
The coefficients are computed from the local data by a tensor-product
least-squares procedure regularized by truncated singular value decomposition
(TSVD). In practice, this is implemented by applying the one-dimensional
TSVD-stabilized Fourier extension operators successively in the two
coordinate directions.

The global approximation is defined patchwise. The local approximations
\(F_\nu\) are evaluated on their corresponding physical patches and then
assembled to approximate \(f\) on \(\Omega\). The construction of the patch
database, the treatment of transition regions, and the practical assembly
rule are described in the following sections. The basic idea is therefore to transform the approximation problem on a curved domain into a set of local Fourier extension approximations on rectangular patches and one-side curved patches.

\section{A basic error estimate}
\label{sec:error-estimate}

In this section, we give a basic error estimate for the proposed patchwise
local Fourier extension approximation. Starting from the one-dimensional
TSVD-stabilized Fourier extension estimate, we show how the directional
errors enter the tensor-product two-dimensional approximation and how the
local map from a reference patch to a physical patch affects the final error.

\subsection{One-dimensional stabilized estimate}

Let
\[
\mathcal{Q}^{\varepsilon,(x)}_{N_x}:L^2(\Lambda_x)\to H_{N_x}^{(x)},
\qquad
\mathcal{Q}^{\varepsilon,(y)}_{N_y}:L^2(\Lambda_y)\to H_{N_y}^{(y)}
\]
denote the one-dimensional TSVD-stabilized Fourier extension operators in
the two coordinate directions. Here \(\varepsilon>0\) is the truncation
threshold. Following the one-dimensional local Fourier extension theory, we
use the estimates
\begin{equation}
\label{eq:1d-estimate-x}
\|v-\mathcal{Q}^{\varepsilon,(x)}_{N_x}v\|_{L^2(\Lambda_x)}
\le
\inf_{\mathbf c\in\mathbb C^{2N_x+1}}
\left(
\|v-\mathcal{F}^{(x)}_{N_x}\mathbf c\|_{L^2(\Lambda_x)}
+
\varepsilon \|\mathbf c\|_{\ell^2}
\right),
\qquad v\in L^2(\Lambda_x),
\end{equation}
and
\begin{equation}
\label{eq:1d-estimate-y}
\|w-\mathcal{Q}^{\varepsilon,(y)}_{N_y}w\|_{L^2(\Lambda_y)}
\le
\inf_{\mathbf d\in\mathbb C^{2N_y+1}}
\left(
\|w-\mathcal{F}^{(y)}_{N_y}\mathbf d\|_{L^2(\Lambda_y)}
+
\varepsilon \|\mathbf d\|_{\ell^2}
\right),
\qquad w\in L^2(\Lambda_y).
\end{equation}
These estimates state that the stabilized one-dimensional reconstruction is
controlled by the best Fourier extension approximation error together with
the TSVD regularization term.

\subsection{Local estimate on a reference patch}

Let
\[
\Lambda=\Lambda_x\times \Lambda_y .
\]
For \(g\in L^2(\Lambda)\), define the tensor-product local Fourier extension
operator
\[
\mathcal{Q}^{\varepsilon}_{N_x,N_y}
=
\mathcal{Q}^{\varepsilon,(x)}_{N_x}
\otimes
\mathcal{Q}^{\varepsilon,(y)}_{N_y}.
\]
This corresponds to applying the one-dimensional stabilized reconstructions
successively in the two coordinate directions.

For later use, define the directional reconstruction errors
\[
\mathcal E_x(g)
=
\left(
\int_{\Lambda_y}
\|g(\cdot,s)
-
\mathcal{Q}^{\varepsilon,(x)}_{N_x}g(\cdot,s)\|_{L^2(\Lambda_x)}^2
\,ds
\right)^{1/2},
\]
and
\[
\mathcal E_y(g)
=
\left(
\int_{\Lambda_x}
\|g(t,\cdot)
-
\mathcal{Q}^{\varepsilon,(y)}_{N_y}g(t,\cdot)\|_{L^2(\Lambda_y)}^2
\,dt
\right)^{1/2}.
\]

\begin{lemma}
\label{lem:reference-patch-estimate}
Assume that the stabilized one-dimensional reconstruction in the
\(x\)-direction is bounded on \(L^2(\Lambda_x)\), namely
\[
\|\mathcal{Q}^{\varepsilon,(x)}_{N_x}\|_{L^2(\Lambda_x)\to L^2(\Lambda_x)}
\le \kappa_x .
\]
Then
\begin{equation}
\label{eq:reference-patch-estimate}
\|g-\mathcal{Q}^{\varepsilon}_{N_x,N_y}g\|_{L^2(\Lambda)}
\le
\mathcal E_x(g)+\kappa_x\mathcal E_y(g).
\end{equation}
\end{lemma}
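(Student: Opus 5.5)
The plan is to split the tensor-product error into a purely $x$-directional defect plus an $x$-reconstruction applied to a $y$-directional defect, and then use the assumed bound $\kappa_x$ on the second piece. Write $I_x,I_y$ for the identities on $L^2(\Lambda_x)$, $L^2(\Lambda_y)$, and regard $\mathcal{Q}^{\varepsilon,(x)}_{N_x}\otimes I_y$ as the operator acting on $g(\cdot,s)$ for a.e.\ fixed $s$. Similarly, $I_x\otimes\mathcal{Q}^{\varepsilon,(y)}_{N_y}$ acts on $g(t,\cdot)$ for a.e.\ fixed $t$. Since
\[
\mathcal{Q}^{\varepsilon}_{N_x,N_y}=(\mathcal{Q}^{\varepsilon,(x)}_{N_x}\otimes I_y)(I_x\otimes\mathcal{Q}^{\varepsilon,(y)}_{N_y}),
\]
we have the telescoping identity
\[
g-\mathcal{Q}^{\varepsilon}_{N_x,N_y}g
=\bigl(g-(\mathcal{Q}^{\varepsilon,(x)}_{N_x}\otimes I_y)g\bigr)
+(\mathcal{Q}^{\varepsilon,(x)}_{N_x}\otimes I_y)\bigl(g-(I_x\otimes\mathcal{Q}^{\varepsilon,(y)}_{N_y})g\bigr).
\]
The triangle inequality in $L^2(\Lambda)$ then reduces the lemma to bounding the two pieces separately.

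For the first piece, Fubini gives
\[
\|g-(\mathcal{Q}^{\varepsilon,(x)}_{N_x}\otimes I_y)g\|_{L^2(\Lambda)}^2
=\int_{\Lambda_y}\|g(\cdot,s)-\mathcal{Q}^{\varepsilon,(x)}_{N_x}g(\cdot,s)\|_{L^2(\Lambda_x)}^2\,ds,
\]
which is exactly $\mathcal E_x(g)^2$.

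For the second piece, set $h=g-(I_x\otimes\mathcal{Q}^{\varepsilon,(y)}_{N_y})g$. By Fubini,
\[
\|(\mathcal{Q}^{\varepsilon,(x)}_{N_x}\otimes I_y)h\|^2_{L^2(\Lambda)}=\int_{\Lambda_y}\|\mathcal{Q}^{\varepsilon,(x)}_{N_x}h(\cdot,s)\|^2_{L^2(\Lambda_x)}\,ds .
\]
The assumed bound gives $\|\mathcal{Q}^{\varepsilon,(x)}_{N_x}h(\cdot,s)\|\le\kappa_x\|h(\cdot,s)\|$ for each $s$, so the integral is at most $\kappa_x^2\|h\|_{L^2(\Lambda)}^2$. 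Applying Fubini once more in the other order, $\|h\|_{L^2(\Lambda)}^2=\int_{\Lambda_x}\|g(t,\cdot)-\mathcal{Q}^{\varepsilon,(y)}_{N_y}g(t,\cdot)\|^2_{L^2(\Lambda_y)}\,dt=\mathcal E_y(g)^2$. Adding the two bounds yields \eqref{eq:reference-patch-estimate}.

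The only step needing care is making the tensor-product and partial-application operators rigorous on $L^2(\Lambda)\cong L^2(\Lambda_x)\otimes L^2(\Lambda_y)$. Two facts must hold: $\mathcal{Q}^{\varepsilon}_{N_x,N_y}$ factors as the composition above, and the slice functions are measurable, so that Fubini applies. I would handle this by noting that each one-dimensional operator is linear and bounded. TSVD reconstruction is a finite-rank linear map, and $\mathcal{Q}^{\varepsilon,(y)}_{N_y}$ is bounded as well since it has finite rank. Consequently the operators $\mathcal{Q}^{\varepsilon,(x)}_{N_x}\otimes I_y$ and $I_x\otimes\mathcal{Q}^{\varepsilon,(y)}_{N_y}$ are well defined bounded operators that act slicewise almost everywhere. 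They commute, their composition is the tensor product, and slice measurability follows from the finite-rank kernel representation. Everything else is direct computation.
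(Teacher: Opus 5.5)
Your proposal is correct and is essentially the paper's proof: your telescoping identity is the paper's splitting $I-\mathcal{Q}^{\varepsilon}_{N_x,N_y}=(I-\mathcal{Q}^{\varepsilon,(x)}_{N_x})\otimes I+\mathcal{Q}^{\varepsilon,(x)}_{N_x}\otimes(I-\mathcal{Q}^{\varepsilon,(y)}_{N_y})$, followed by the triangle inequality, Fubini for the first term, and the slicewise $\kappa_x$ bound plus Fubini in the other order for the second. One quibble in your rigor paragraph: finite rank alone does not imply boundedness, so the boundedness of $\mathcal{Q}^{\varepsilon,(y)}_{N_y}$ should instead be attributed to its TSVD form (it is built from finitely many $L^2$ inner products, and in the continuous setting it is an orthogonal projection), a point the paper simply takes for granted.
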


\begin{proof}
Using the identity
\[
I-\mathcal{Q}^{\varepsilon}_{N_x,N_y}
=
(I-\mathcal{Q}^{\varepsilon,(x)}_{N_x})\otimes I
+
\mathcal{Q}^{\varepsilon,(x)}_{N_x}
\otimes
(I-\mathcal{Q}^{\varepsilon,(y)}_{N_y}),
\]
we apply the triangle inequality. The first term gives \(\mathcal E_x(g)\)
by Fubini's theorem, while the second term is bounded by
\(\kappa_x\mathcal E_y(g)\). This proves
\eqref{eq:reference-patch-estimate}.
\end{proof}

Combining Lemma~\ref{lem:reference-patch-estimate} with
\eqref{eq:1d-estimate-x}--\eqref{eq:1d-estimate-y}, the two-dimensional
reference-patch error can be controlled by the one-dimensional Fourier
extension approximation defects in the two coordinate directions. Define
\[
\mathcal D_x(g)
=
\left(
\int_{\Lambda_y}
\inf_{\mathbf c(s)}
\left(
\|g(\cdot,s)-\mathcal{F}^{(x)}_{N_x}\mathbf c(s)\|_{L^2(\Lambda_x)}
+
\varepsilon \|\mathbf c(s)\|_{\ell^2}
\right)^2
\,ds
\right)^{1/2},
\]
and
\[
\mathcal D_y(g)
=
\left(
\int_{\Lambda_x}
\inf_{\mathbf d(t)}
\left(
\|g(t,\cdot)-\mathcal{F}^{(y)}_{N_y}\mathbf d(t)\|_{L^2(\Lambda_y)}
+
\varepsilon \|\mathbf d(t)\|_{\ell^2}
\right)^2
\,dt
\right)^{1/2}.
\]
Then
\begin{equation}
\label{eq:reference-best-estimate}
\|g-\mathcal{Q}^{\varepsilon}_{N_x,N_y}g\|_{L^2(\Lambda)}
\le
\mathcal D_x(g)
+
\kappa_x\mathcal D_y(g).
\end{equation}

\begin{remark}[Local subdivision and effective frequency]
\label{rem:local-frequency-reduction}
The quantities
\(\mathcal D_x(g)\) and
\(\mathcal D_y(g)\) measure the one-dimensional Fourier
extension approximation defects of the pulled-back function \(g\) in the two
coordinate directions. If \(g\) is smooth and has moderate local oscillation,
then the residual terms in these quantities are small for suitable
coefficient vectors \(\mathbf c(s)\) and \(\mathbf d(t)\) with moderate
\(\ell^2\)-norms.

The role of the patch partition is analogous to that in the one-dimensional
local Fourier extension method. If the physical function contains an
oscillatory component of frequency \(\omega_x\) in the \(x\)-direction, then
on a patch of width \(\Delta x\) its effective local frequency after
rescaling is proportional to \(\omega_x\Delta x\). Therefore, refining the
partition reduces the local frequency seen by the Fourier extension
approximation. This explains why fixed moderate Fourier orders can remain
effective on sufficiently small patches.
\end{remark}

\subsection{Patchwise estimate on the physical domain}

Let \(P\) be a local physical patch, either rectangular or one-side curved,
and let
\[
\Psi_P:\widehat Q\to P
\]
be the corresponding map from a fixed reference square \(\widehat Q\). After
an affine rescaling from \(\widehat Q\) to
\(\Lambda=\Lambda_x\times\Lambda_y\), we still denote the pulled-back function
by
\[
g_P=f\circ \Psi_P.
\]
Assume that \(\Psi_P\) is bi-Lipschitz and that
\[
0<J_{P,\min}
\le
|\det D\Psi_P|
\le
J_{P,\max}
<\infty.
\]
Let
\[
G_P=\mathcal{Q}^{\varepsilon}_{N_x,N_y}g_P
\]
be the local Fourier extension approximation on the reference patch, and
define
\[
F_P(x,y)=G_P(\Psi_P^{-1}(x,y)),
\qquad (x,y)\in P.
\]

\begin{theorem}
\label{thm:physical-patch-estimate}
Under the assumptions above,
\begin{equation}
\label{eq:physical-patch-estimate}
\|f-F_P\|_{L^2(P)}
\le
J_{P,\max}^{1/2}
\left(
\mathcal E_x(g_P)+\kappa_x\mathcal E_y(g_P)
\right).
\end{equation}
Equivalently, in terms of the directional best-approximation defects,
\begin{equation}
\label{eq:physical-patch-best-estimate}
\|f-F_P\|_{L^2(P)}
\le
J_{P,\max}^{1/2}
\left(
\mathcal D_x(g_P)
+
\kappa_x\mathcal D_y(g_P)
\right).
\end{equation}
\end{theorem}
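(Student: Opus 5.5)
The plan is a change of variables that transfers the error from the physical patch \(P\) to the reference rectangle \(\Lambda\), after which Lemma~\ref{lem:reference-patch-estimate} and the one-dimensional bounds \eqref{eq:1d-estimate-x}--\eqref{eq:1d-estimate-y} finish the argument.

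First, since \(F_P=G_P\circ\Psi_P^{-1}\) and \(g_P=f\circ\Psi_P\), we have on \(P\)
\[
f-F_P=(g_P-G_P)\circ\Psi_P^{-1}.
\]
We treat \(\Psi_P\) as composed with the fixed affine rescaling \(\widehat Q\to\Lambda\), so its Jacobian bounds are understood for the composed map, which we still call \(\Psi_P\). Because \(\Psi_P\) is bi-Lipschitz, it is differentiable almost everywhere and the area formula applies:
\[
\|f-F_P\|_{L^2(P)}^2=\int_{\Lambda}|g_P(t,s)-G_P(t,s)|^2\,|\det D\Psi_P(t,s)|\,dt\,ds
\le J_{P,\max}\,\|g_P-G_P\|_{L^2(\Lambda)}^2 .
\]
Taking square roots gives \(\|f-F_P\|_{L^2(P)}\le J_{P,\max}^{1/2}\|g_P-\mathcal Q^{\varepsilon}_{N_x,N_y}g_P\|_{L^2(\Lambda)}\). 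Here \(g_P\in L^2(\Lambda)\), because the lower Jacobian bound \(J_{P,\min}>0\) gives \(\|g_P\|_{L^2(\Lambda)}\le J_{P,\min}^{-1/2}\|f\|_{L^2(P)}\). So Lemma~\ref{lem:reference-patch-estimate} applies to \(g=g_P\), and it yields \eqref{eq:physical-patch-estimate}.

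For \eqref{eq:physical-patch-best-estimate}, apply \eqref{eq:1d-estimate-x} pointwise in \(s\) to \(v=g_P(\cdot,s)\). By Fubini, this is in \(L^2(\Lambda_x)\) for a.e. \(s\). Squaring and integrating over \(\Lambda_y\) gives \(\mathcal E_x(g_P)\le\mathcal D_x(g_P)\). Doing the same in \(t\) with \eqref{eq:1d-estimate-y} gives \(\mathcal E_y(g_P)\le\mathcal D_y(g_P)\), which is exactly \eqref{eq:reference-best-estimate}. Substituting into the first bound completes the proof.

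The only delicate step is justifying the change of variables and the measurability of the integrands. For the first, bi-Lipschitz regularity suffices via the area formula. For the second, the infima defining \(\mathcal D_x,\mathcal D_y\) need to be measurable in \(s,t\); I would either note that they dominate measurable functions, using upper integrals if necessary, or observe that the infimum over the finite-dimensional coefficient space is attained and depends continuously on the data.
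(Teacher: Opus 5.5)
Your proposal is correct and follows the paper's proof. You change variables via $\Psi_P$, bound the Jacobian by $J_{P,\max}$, apply Lemma~\ref{lem:reference-patch-estimate}, and obtain the second bound from $\mathcal E_x\le\mathcal D_x$, $\mathcal E_y\le\mathcal D_y$ as in \eqref{eq:reference-best-estimate}. Your convention that the Jacobian bounds refer to the map composed with the affine rescaling onto $\Lambda$ is what makes the constant come out as exactly $J_{P,\max}^{1/2}$, a point the paper covers only with the phrase ``after accounting for the affine rescaling.''
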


\begin{proof}
By the change of variables \((x,y)=\Psi_P(\xi,\eta)\), we have
\[
\|f-F_P\|_{L^2(P)}^2
=
\int_{\widehat Q}
|g_P(\xi,\eta)-G_P(\xi,\eta)|^2
|\det D\Psi_P(\xi,\eta)|
\,d\xi d\eta .
\]
Using the upper bound on the Jacobian determinant gives
\[
\|f-F_P\|_{L^2(P)}
\le
J_{P,\max}^{1/2}
\|g_P-G_P\|_{L^2(\widehat Q)}.
\]
After accounting for the affine rescaling between \(\widehat Q\) and
\(\Lambda\), Lemma~\ref{lem:reference-patch-estimate} yields
\eqref{eq:physical-patch-estimate}. Estimate
\eqref{eq:physical-patch-best-estimate} follows from
\eqref{eq:reference-best-estimate}.
\end{proof}

\subsection{Global patchwise estimate}

Assume that the final physical output is assigned on a collection of patches
\(\mathcal P\). If the output patches have disjoint interiors, then
\[
\|f-F\|_{L^2(\Omega)}^2
=
\sum_{P\in\mathcal P}
\|f-F_P\|_{L^2(P)}^2.
\]
Therefore, by Theorem~\ref{thm:physical-patch-estimate},
\begin{equation}
\label{eq:global-patchwise-estimate}
\|f-F\|_{L^2(\Omega)}
\le
\left(
\sum_{P\in\mathcal P}
J_{P,\max}
\left(
\mathcal E_x(g_P)+\kappa_x\mathcal E_y(g_P)
\right)^2
\right)^{1/2}.
\end{equation}
Using the directional best-approximation form, we also obtain
\begin{equation}
\label{eq:global-patchwise-best-estimate}
\|f-F\|_{L^2(\Omega)}
\le
\left(
\sum_{P\in\mathcal P}
J_{P,\max}
\left(
\mathcal D_x(g_P)
+
\kappa_x\mathcal D_y(g_P)
\right)^2
\right)^{1/2}.
\end{equation}

If several local approximations are available at the same physical point,
the final approximation is defined by a fixed patchwise assembly rule. Such a
rule specifies which local approximation contributes to the global output and
does not change the local form of the estimate.

\begin{remark}[Effect of boundary patch size]
\label{rem:jacobian-boundary-patch}
The Jacobian factor and the smoothness of the pulled-back function are the
two places where the geometry of a curved boundary patch enters the estimate.
For example, consider a top-type patch
\[
P_T=\{(x,y): a\le x\le b,\ y_B\le y\le \gamma(x)\},
\]
and set
\[
\Delta x=b-a,\qquad H(x)=\gamma(x)-y_B.
\]
The standard map
\[
x=a+\Delta x\,\xi,\qquad
y=y_B+\eta H(a+\Delta x\,\xi),
\qquad (\xi,\eta)\in[0,1]^2,
\]
has
\[
\det D\Psi=\Delta x\,H(a+\Delta x\,\xi).
\]
Thus the determinant is controlled by the local width and height of the
patch. More importantly, after normalizing the patch to a fixed reference
rectangle with a representative height \(H_0\), the curved boundary becomes
\[
\widehat\gamma(\xi)
=
\frac{\gamma(a+\Delta x\,\xi)-y_B}{H_0}.
\]
Its second derivative satisfies
\[
\widehat\gamma''(\xi)
=
\frac{(\Delta x)^2}{H_0}
\gamma''(a+\Delta x\,\xi).
\]
If the boundary patches have bounded aspect ratio, so that
\(H_0\asymp \Delta x\), then
\[
\|\widehat\gamma''\|_{L^\infty(0,1)}
\le
C\Delta x\,\|\gamma''\|_{L^\infty(a,b)}.
\]
Therefore, although the physical curvature of the boundary is unchanged, the
normalized curved side on each reference patch becomes closer to a straight
segment as the partition is refined. This reduces the non-affine geometric
distortion of the pulled-back function and helps decrease the error constants
associated with boundary patches.
\end{remark}

\section{Parameter calibration on rectangular patches}
\label{sec:parameter-calibration}

Before treating curved and general-domain patches, we first calibrate the
main local parameters of the tensor-product Fourier extension operator on
rectangular patches. Rectangular patches do not involve geometric distortion,
and therefore provide a clean setting for testing the approximation behavior
of the local algebraic procedure itself. In the \(x\)-direction the relevant
parameters are the extension parameter \(T_x\), the sampling ratio
\(\gamma_x\), and the local Fourier order \(N_x\), with analogous parameters
in the \(y\)-direction.

In the parameter tests, we use the separable oscillatory function
\[
    F(x,y)=\exp(\mathrm{i}\omega_x x)\exp(\mathrm{i}\omega_y y),
\]
where \(\omega_x\) and \(\omega_y\) denote the physical frequencies in the
\(x\)- and \(y\)-directions, respectively. Since the present calibration is
intended to identify the effective selection rule for the \(x\)-direction
parameters, the figures mainly report the dependence of the error on
\(T_x\), \(N_x\), \(\gamma_x\), and \(\omega_x\). We also tested the influence
of the \(y\)-direction parameters on the \(x\)-direction calibration, but the
observed effect was minor and did not change the admissible ranges or the
threshold behavior reported below. For this reason, only the representative
\(x\)-direction tests are displayed.

\begin{figure}[htbp]
\centering
\begin{subfigure}[b]{0.32\textwidth}
    \centering
    \includegraphics[width=\textwidth]{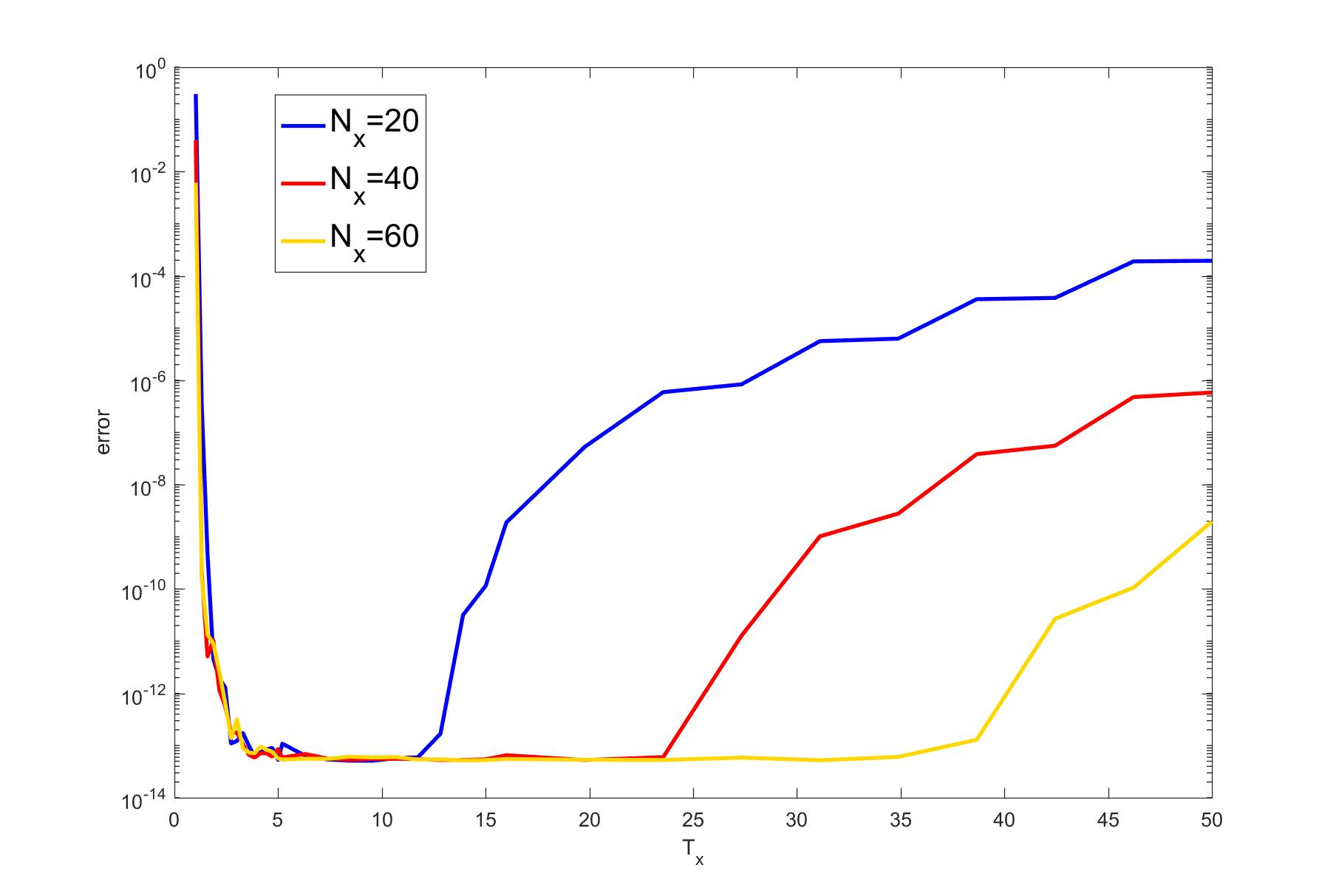}
    \caption{Variation of \(N_x\).}
\end{subfigure}
\hfill
\begin{subfigure}[b]{0.32\textwidth}
    \centering
    \includegraphics[width=\textwidth]{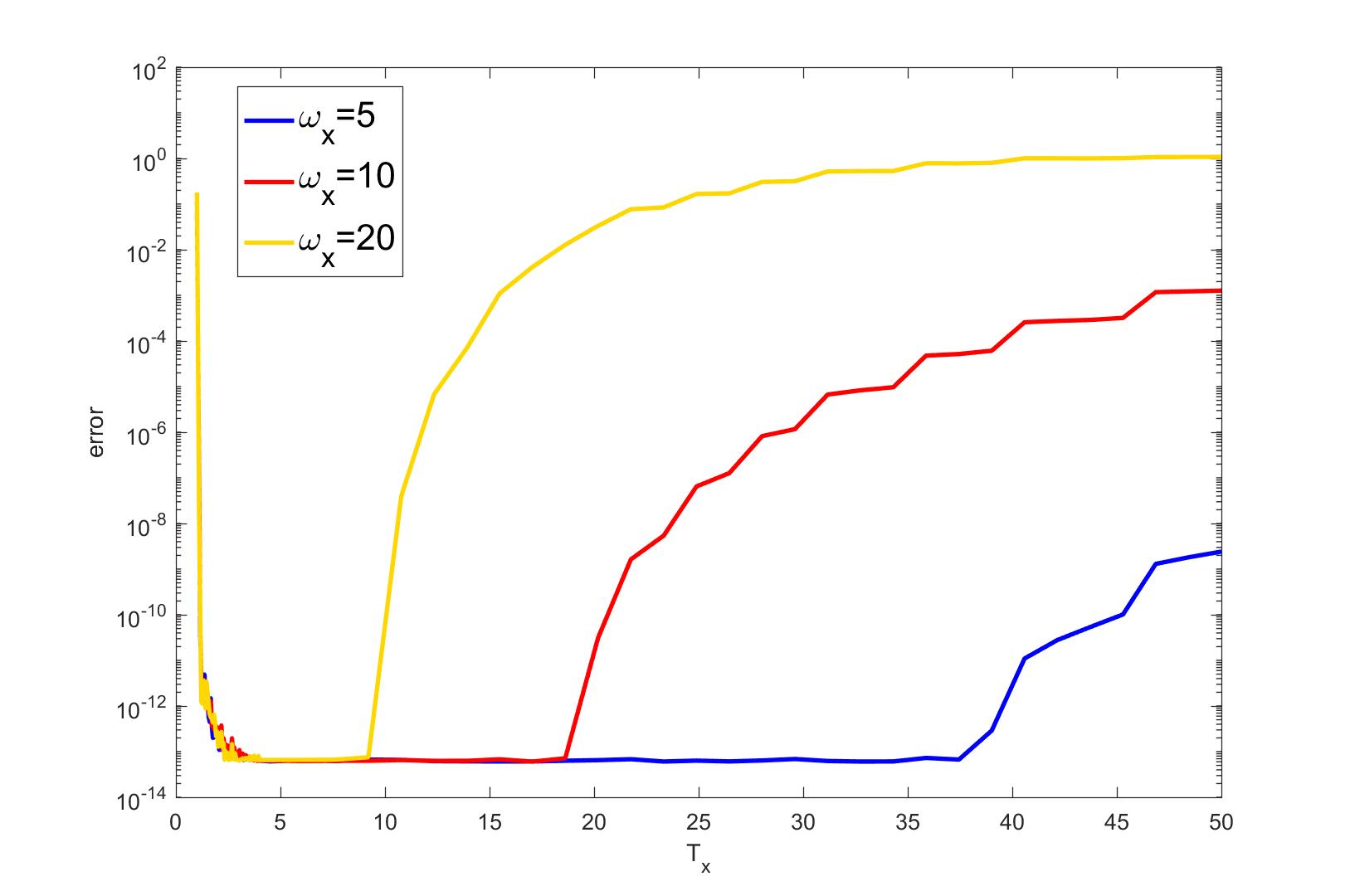}
    \caption{Variation of \(\omega_x\).}
\end{subfigure}
\hfill
\begin{subfigure}[b]{0.32\textwidth}
    \centering
    \includegraphics[width=\textwidth]{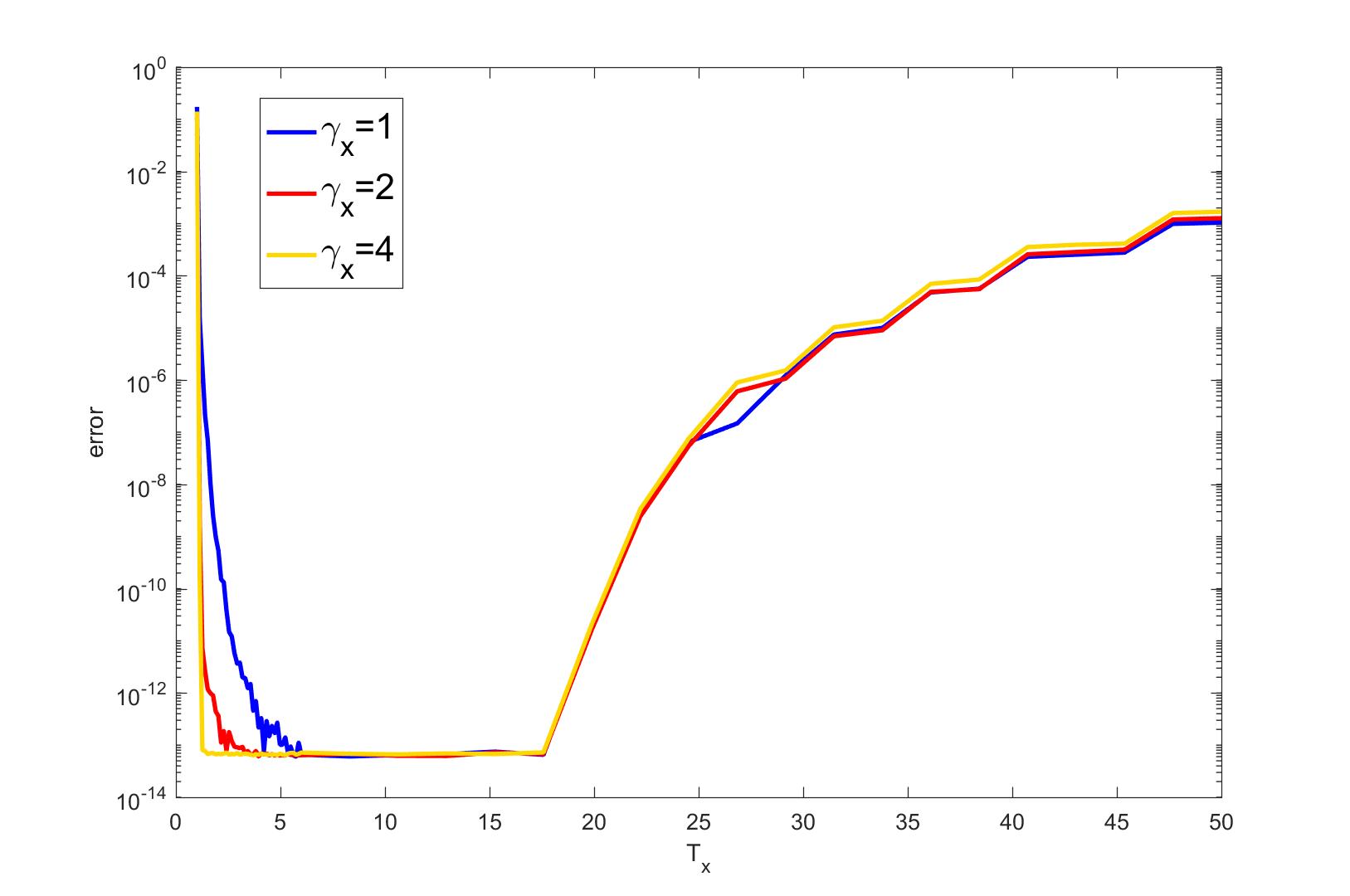}
    \caption{Variation of \(\gamma_x\).}
\end{subfigure}

\vspace{2mm}

\begin{subfigure}[b]{0.32\textwidth}
    \centering
    \includegraphics[width=\textwidth]{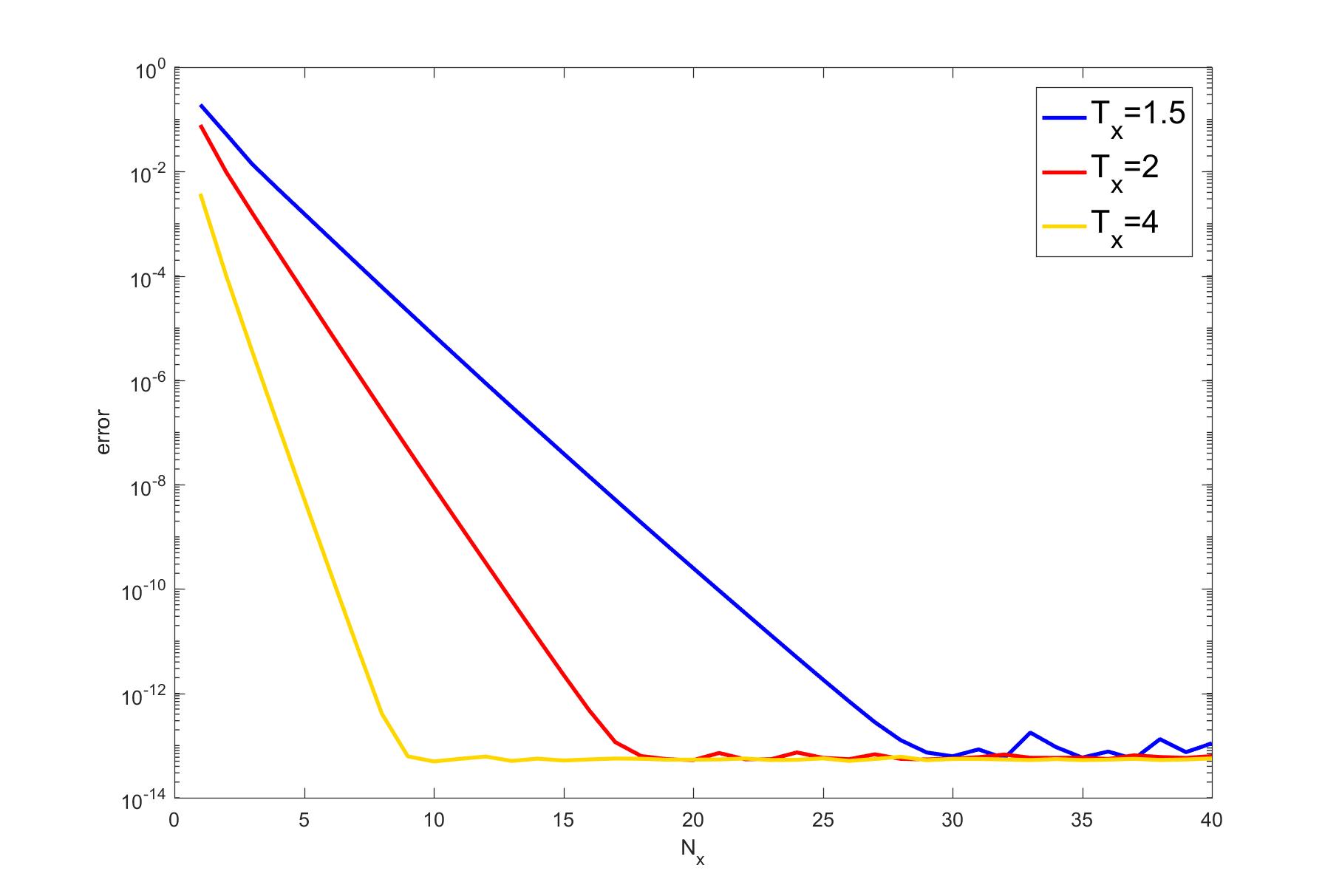}
    \caption{Variation of \(T_x\).}
\end{subfigure}
\hfill
\begin{subfigure}[b]{0.32\textwidth}
    \centering
    \includegraphics[width=\textwidth]{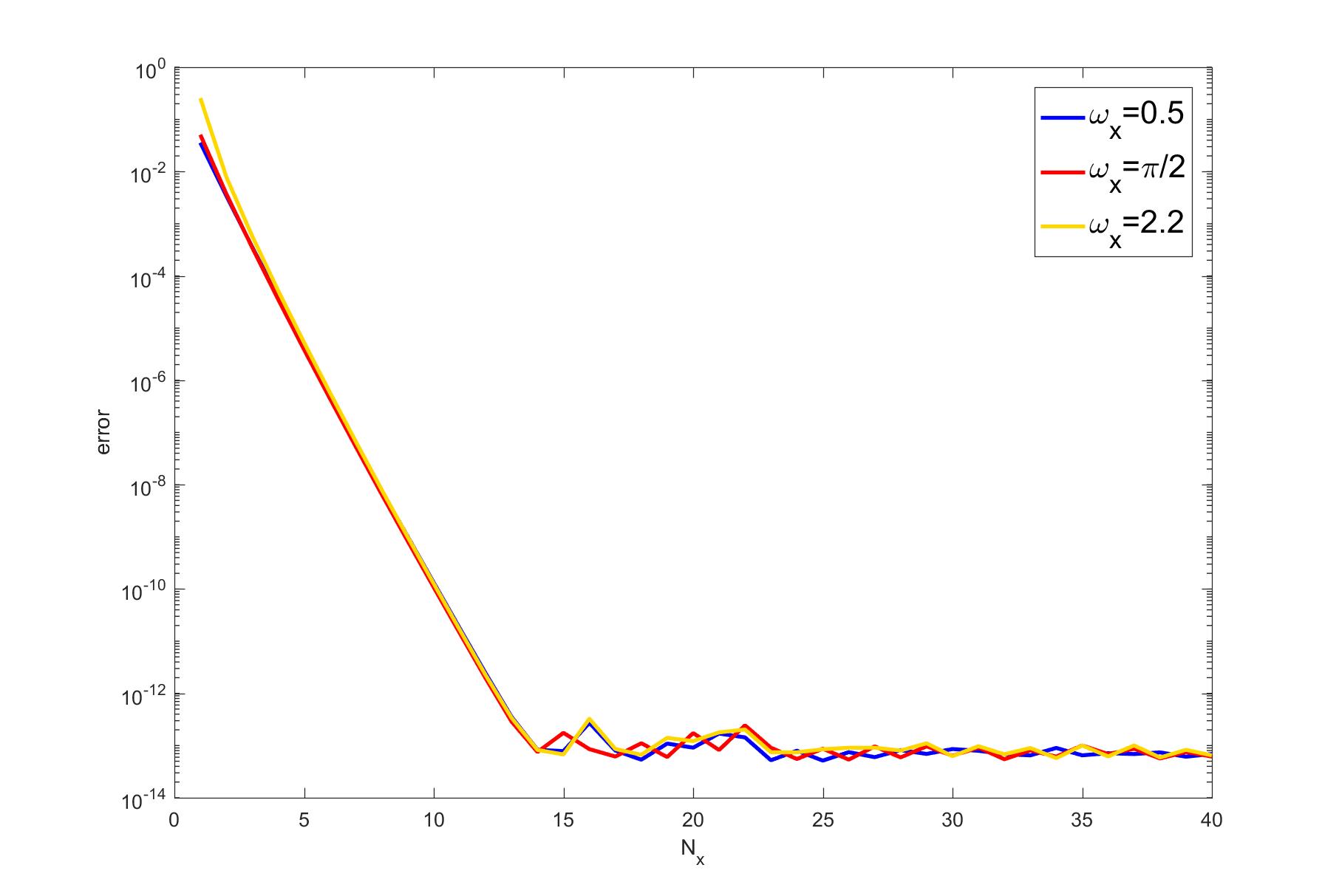}
    \caption{Variation of \(\omega_x\).}
\end{subfigure}
\hfill
\begin{subfigure}[b]{0.32\textwidth}
    \centering
    \includegraphics[width=\textwidth]{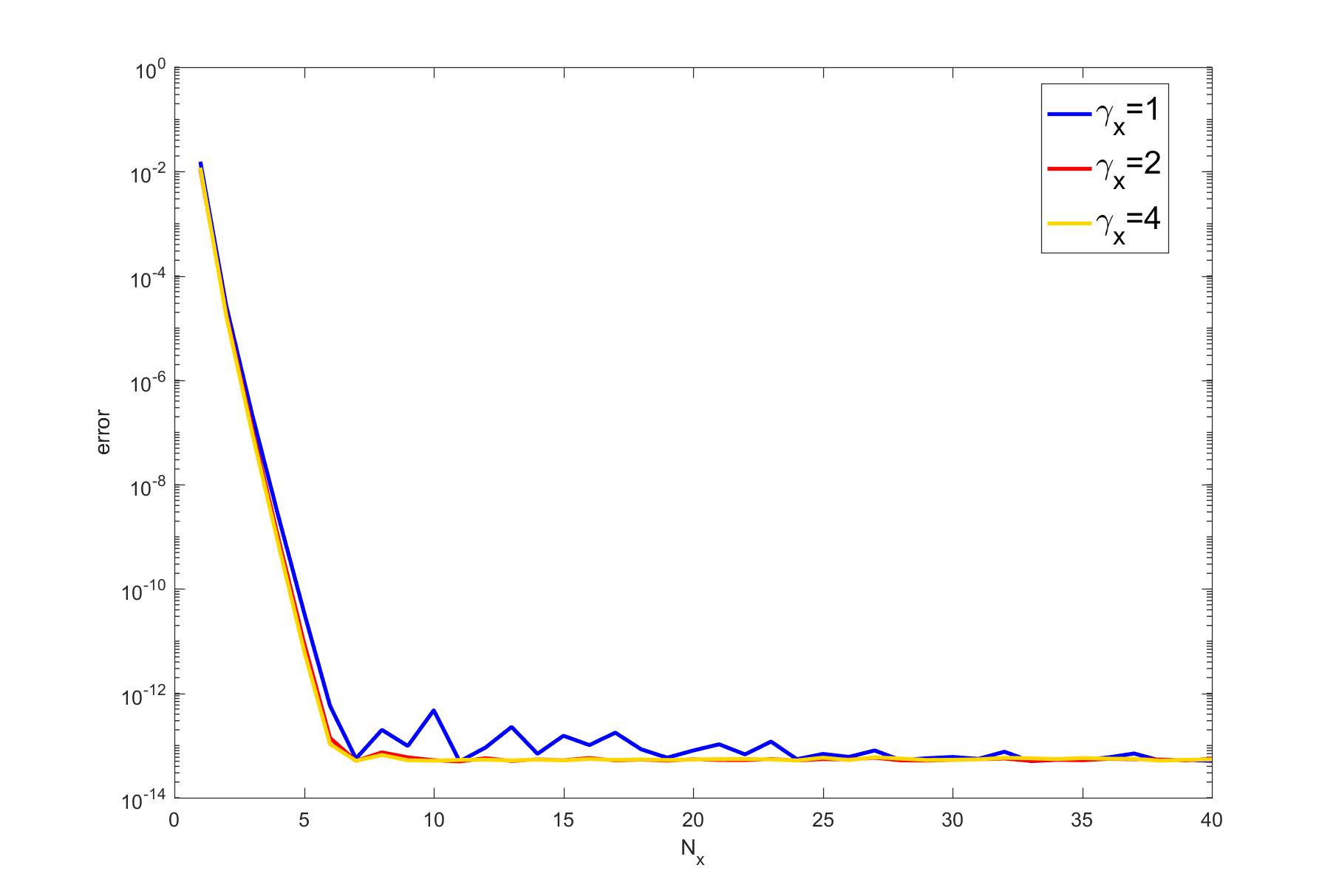}
    \caption{Variation of \(\gamma_x\).}
\end{subfigure}
\caption{Representative parameter tests on rectangular patches for the
separable oscillatory function
\(F(x,y)=\exp(\mathrm{i}\omega_x x)\exp(\mathrm{i}\omega_y y)\).
The first row shows the dependence of the error on the extension parameter
\(T_x\), and the second row shows the dependence on the local Fourier order
\(N_x\).}
\label{fig:parameter-test}
\end{figure}

The numerical tests show that the tensor-product construction inherits the
essential parameter-selection mechanism of the one-dimensional local Fourier
extension method. For fixed local frequency and Fourier order, there is an
admissible range
\[
    T_{x,\min}\le T_x\le T_{x,\max}
\]
in which the approximation reaches the accuracy level determined by the TSVD
threshold and double precision. The lower bound \(T_{x,\min}\) is mainly
controlled by the sampling ratio \(\gamma_x\), while the upper admissible
range is affected by the local oscillation level \(\omega_x\) and the Fourier
order \(N_x\). Once \(T_x\) lies in the admissible range, the error decreases
rapidly as \(N_x\) passes a threshold value. The additional cross tests with
respect to the \(y\)-direction parameters indicate that this \(x\)-direction
selection rule is only weakly affected by variations in the transverse
direction.

Figure~\ref{fig:parameter-test} summarizes representative tests. The first
row shows how the error depends on the extension parameter \(T_x\), while the
second row shows its dependence on the Fourier order \(N_x\). The results
confirm that \(T_x\), \(\gamma_x\), and \(N_x\) should be selected jointly.

To quantify the effect of oversampling, Table~\ref{tab:T1} lists the
observed lower admissible bounds \(T_{x,\min}\) for several choices of
\(\gamma_x\). As in the one-dimensional local Fourier extension method,
increasing \(\gamma_x\) decreases the lower threshold of the admissible range.
In particular, the choice \(T=4\) is not admissible for \(\gamma=1\), but
becomes admissible when the sampling ratio is mildly increased to
\(\gamma=1.2\).

\begin{table}[htbp]
\begin{center}
\caption{Observed lower admissible bound \(T_{x,\min}\) for several
oversampling ratios \(\gamma_x\) on rectangular patches.}
\label{tab:T1}
\small
{\begin{tabular*}{0.8\textwidth}{@{\extracolsep\fill}cccccc}
\toprule
& \(\gamma=1\) & \(\gamma=1.2\) & \(\gamma=1.5\) & \(\gamma=2\) & \(\gamma=4\) \\
\midrule
\(T_{x,\min}\) & 5.5 & 3.9 & 2.9 & 2.2 & 1.2 \\
\bottomrule
\end{tabular*}}
\end{center}
\end{table}

\begin{table}[htbp]
\begin{center}
\caption{Observed threshold values of the local Fourier order \(N_x\)
required to reach the target accuracy for different extension parameters
\(T_x\).}
\label{tab:T2}
\small
{\begin{tabular*}{0.8\textwidth}{@{\extracolsep\fill}cccccccccc}
\toprule
& \(T_x=1.2\) & \(T_x=1.5\) & \(T_x=2\) & \(T_x=3\) & \(T_x=4\) & \(T_x=6\) \\
\midrule
\(N_x\) & 58 & 27 & 17 & 13 & 10 & 9 \\
\bottomrule
\end{tabular*}}
\end{center}
\end{table}

The second relation concerns the local Fourier order \(N_x\). Once \(T_x\)
is admissible, the numerical error drops rapidly when \(N_x\) exceeds a
threshold value. Table~\ref{tab:T2} records the observed thresholds for
several choices of \(T_x\). Very small extension parameters require much
larger Fourier orders: for example, \(T_x=1.2\) requires \(N_x=58\), whereas
\(T_x=4\) requires only \(N_x=10\), and \(T_x=6\) requires \(N_x=9\).
Thus increasing \(T_x\) can reduce the required Fourier order, but this
benefit must be balanced against local resolution.

The reason for not simply taking \(T_x\) as large as possible is local
resolution. Suppose that the target function contains an \(x\)-direction
oscillatory component with physical frequency \(\omega_x\). On a patch of
width \(\Delta x\), after rescaling to the Fourier extension interval
\([0,2\pi/T_x]\), the effective local frequency is proportional to
\[
    \mu_x=\frac{\omega_x\Delta x\,T_x}{2\pi}.
\]
For a local Fourier space with modes \(|\ell|\le N_x\), a practical
resolution condition is
\[
    \mu_x\lesssim N_x.
\]
Thus, for fixed \(N_x\) and patch size, a smaller admissible value of \(T_x\)
improves the ability of the local approximation to resolve oscillations.
Equivalently, for a fixed oscillatory function, reducing \(T_x\) can reduce
the required number of subintervals. Therefore, although \(T=6\) slightly
reduces the Fourier order required to reach the target accuracy compared with
\(T=4\), it also weakens the local resolution capability.

The parameter choice should also account for the cost of the local
operations. In one coordinate direction, the number of local sampling points
satisfies
\[
    m_x\approx \gamma_x(2N_x+1).
\]
Larger \(\gamma_x\) or \(N_x\) increases the size of the local matrices and
the number of retained singular components in the TSVD procedure. Since the
reference matrices are precomputed and reused, this cost remains local; but
an efficient parameter choice should keep the local sampling size and the
number of retained modes moderate. The global complexity of the full
patchwise method will be discussed in Section~\ref{sec:patch-construction}.

The robust one-dimensional parameter set
\[
    T=6,\qquad \gamma=1,\qquad N=9
\]
is stable and effective. The present rectangular-patch tests indicate that a
slightly smaller extension parameter, together with mild oversampling,
provides a better balance for the two-dimensional patchwise computations.
Specifically, reducing \(T\) from \(6\) to \(4\) improves local resolution,
while Table~\ref{tab:T1} shows that increasing the sampling ratio from
\(\gamma=1\) to \(\gamma=1.2\) is sufficient to keep \(T=4\) inside the
admissible range. Table~\ref{tab:T2} further shows that the corresponding
Fourier order \(N=10\) is sufficient to reach the target accuracy and remains
close to the value \(N=9\) used with \(T=6\).

Based on these observations, we adopt
\[
    T_x=T_y=4,\qquad
    \gamma_x=\gamma_y=1.2,\qquad
    N_x=N_y=10
\]
as the default local parameter set in the numerical experiments. This choice
keeps the extension parameter small enough to retain local resolution, while
maintaining stability through mild oversampling. The local Fourier order is
moderate, so the corresponding local matrices remain small and can be reused
efficiently.

These values should be understood as practical reference choices for
double-precision computations with a TSVD threshold close to machine
precision. If higher-precision arithmetic or a different truncation threshold
is used, the admissible range of \(T\), the number of retained singular
components, and the recommended values of \(N\) and \(\gamma\) should be
recalibrated.

\section{Approximation on curved trapezoidal patches}
\label{sec:curved-trapezoid}

In this section, we describe the treatment of one-side curved trapezoidal
patches. These patches are the basic boundary components used later in the
scan-based construction of general domains. We first present the standard
LFE-based transfer for a smooth curved side, then show why local subdivision
improves the accuracy, and finally introduce a smooth-cover correction for
boundary patches with small-scale rough perturbations.

\subsection{Sampling nodes and LFE-based vertical transfer}
\label{subsec:curved-vertical-transfer}

It is sufficient to describe the procedure for a representative top-type
curved trapezoidal patch
\[
    \Omega_b=\{(x,y):x\in[a,b],\ 0\le y\le b(x)\},
\]
where \(b(x)>0\) describes the curved upper boundary. Bottom, left, and right
patches are treated analogously, by reversing the normal direction or
exchanging the roles of \(x\) and \(y\).

For the top patch, we use an \(x\)-aligned semi-structured sampling strategy.
The nodes in the \(x\)-direction are equispaced,
\[
    x_i=a+i\Delta x,\qquad i=0,1,\ldots,m_x-1.
\]
For each fixed \(x_i\), the physical sampling nodes lie on the vertical
segment
\[
    \{(x_i,y):0\le y\le b(x_i)\}.
\]
In the implementation, the boundary point \(y=b(x_i)\) is included so that
the curved side is explicitly incorporated into the sampled data.  Figure~\ref{fig:curved_nodes} illustrates this sampling pattern. The blue
circles denote the physical sampling nodes on the curved trapezoidal patch,
while the red dots indicate the mapped target nodes after the vertical
transfer. For graphical clarity, only a reduced number of nodes is displayed;
the actual computations use the same sampling strategy with the prescribed
local resolution.

\begin{figure}[htbp]
\centering
\includegraphics[width=0.5\textwidth]{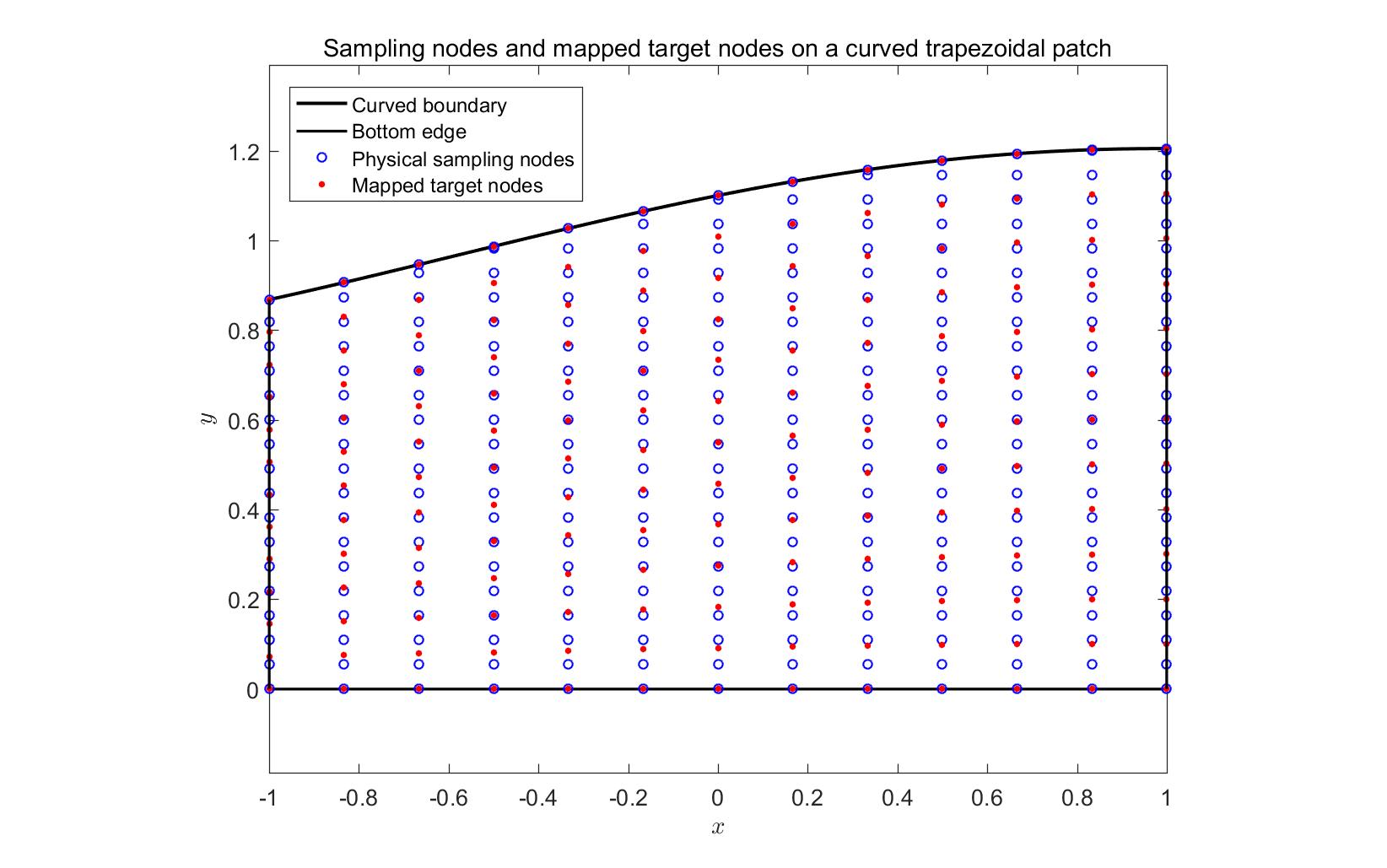}
\caption{Sampling and vertical-transfer nodes on a representative top-type
curved trapezoidal patch.}
\label{fig:curved_nodes}
\end{figure}

To convert the data on the curved patch into a rectangular tensor-product
array, we introduce target nodes on each vertical line,
\[
    y_{i,j}^{*}
    =
    \frac{j}{m_y-1}\,b(x_i),
    \qquad
    j=0,1,\ldots,m_y-1.
\]
These nodes correspond to an equispaced discretization in the normalized
vertical coordinate
\[
    \eta=\frac{y}{b(x)}\in[0,1].
\]
The values at the target nodes are obtained by applying the one-dimensional
local Fourier extension procedure along each vertical line. In this way, the
data on the curved trapezoid are transferred to a fixed-size rectangular
array without using low-order interpolation.

After the vertical LFE transfer, we obtain
\[
    F_{j,i}
    \approx
    u\!\left(x_i,\frac{j}{m_y-1}b(x_i)\right),
    \qquad
    i=0,\ldots,m_x-1,\quad j=0,\ldots,m_y-1.
\]
This array is then processed by the tensor-product local Fourier extension
procedure for rectangular patches. Thus, the standard treatment of a curved
trapezoidal patch consists of two local stages: a one-dimensional LFE-based
line transfer followed by a tensor-product LFE approximation on the resulting
rectangular data.

\subsection{Effect of vertical subdivision on curved trapezoidal patches}
\label{subsec:curved-trapezoid-splitting}

We next examine the influence of the curved boundary geometry on the
approximation accuracy. The purpose is to show that, when the curved side has
a relatively large variation on a single patch, further local subdivision can
reduce the geometric distortion and improve the approximation.

We consider
\[
    b(x)=1+0.5x^2+0.2\cos(\pi x+0.30),
    \qquad -1\le x\le 1,
\]
and the corresponding patch
\[
    \Omega_b
    =
    \{(x,y):-1\le x\le 1,\ 0\le y\le b(x)\}.
\]
Although \(b(x)\) is smooth, its height variation is sufficient to reveal the
effect of using a single large curved patch. We compare the following two
treatments:
\[
\begin{array}{ll}
\text{one-patch treatment:}
&
\Omega_b \text{ is approximated as one curved trapezoidal patch},\\[1mm]
\text{two-patch treatment:}
&
\Omega_b \text{ is divided vertically at } x=0
\text{ into two curved trapezoidal patches}.
\end{array}
\]
The two test functions are
\[
    u_1(x,y)=\frac{\sin(xy/2)}{1+y^2/4},
    \qquad
    u_2(x,y)=\cos(1.45x)\sin(1.37y).
\]

\begin{figure}[htbp]
\centering
\begin{subfigure}[b]{0.49\textwidth}
    \centering
    \includegraphics[width=\textwidth]{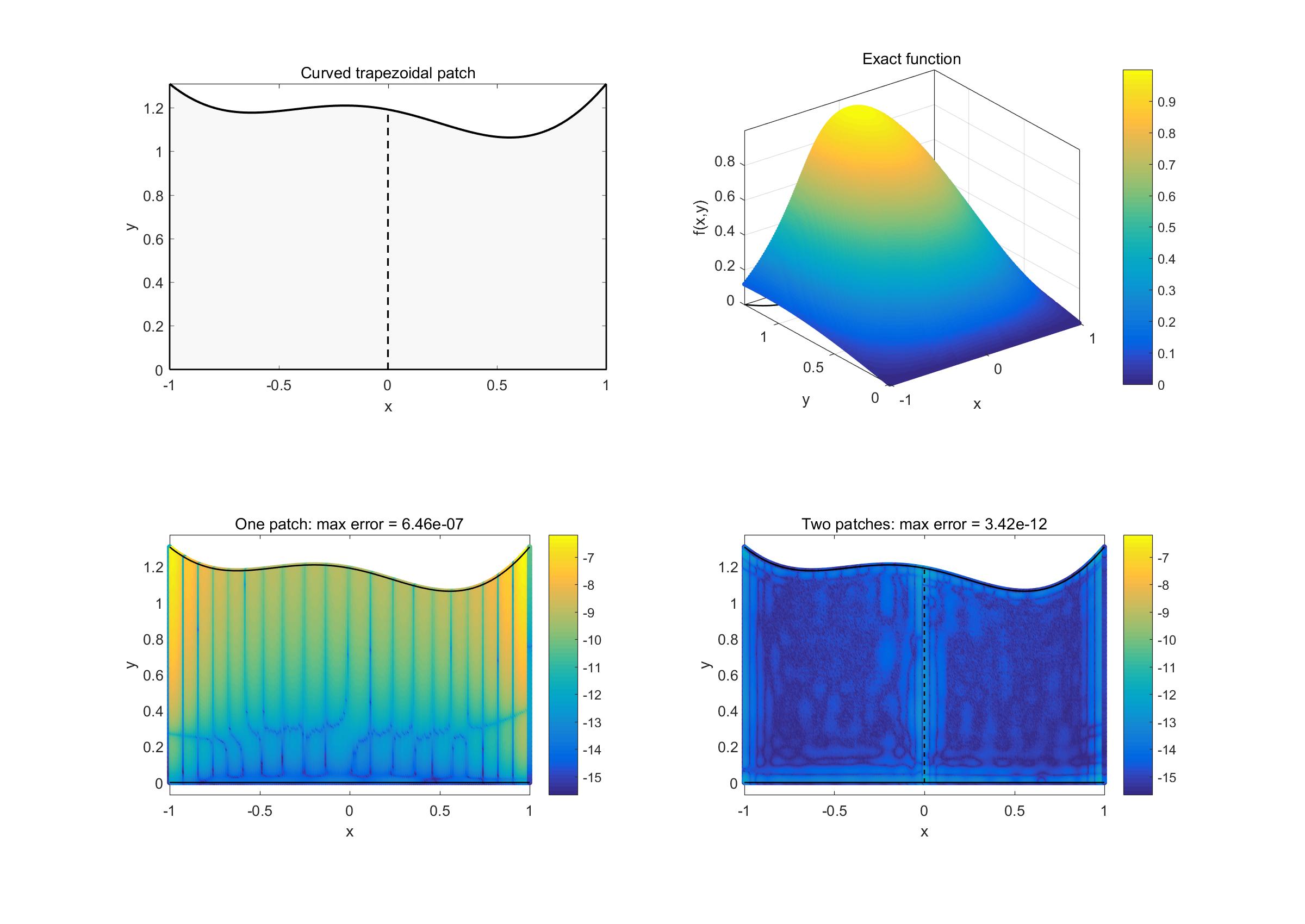}
    \caption{\(u_1(x,y)=\sin(xy/2)/(1+y^2/4)\). The maximum error is reduced
    from \(6.46\times10^{-7}\) to \(3.42\times10^{-12}\).}
    \label{fig:curved_split_u1}
\end{subfigure}
\hfill
\begin{subfigure}[b]{0.49\textwidth}
    \centering
    \includegraphics[width=\textwidth]{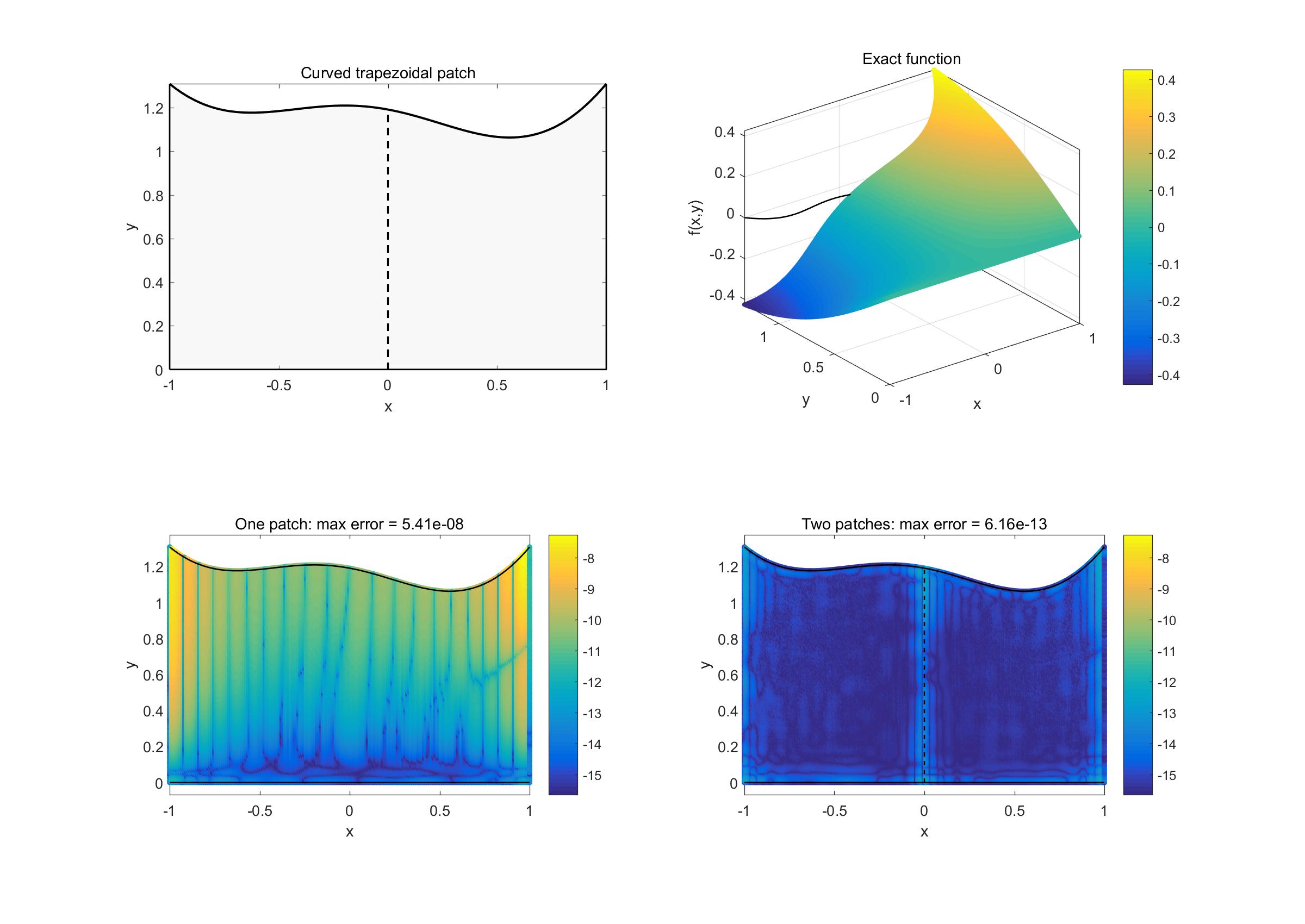}
    \caption{\(u_2(x,y)=\cos(1.45x)\sin(1.37y)\). The maximum error is reduced
    from \(5.41\times10^{-8}\) to \(6.16\times10^{-13}\).}
    \label{fig:curved_split_u2}
\end{subfigure}
\caption{Effect of vertical subdivision on a curved trapezoidal patch. Splitting the patch reduces the geometric distortion and significantly improves the approximation accuracy.}
\label{fig:curved_split}
\end{figure}

Figure~\ref{fig:curved_split} shows the results. For \(u_1\), the one-patch
treatment gives a maximum error of about \(6.46\times10^{-7}\), whereas the
two-patch treatment reduces the maximum error to about
\(3.42\times10^{-12}\). For \(u_2\), the maximum error is reduced from about
\(5.41\times10^{-8}\) to about \(6.16\times10^{-13}\). Thus, in both cases,
splitting the curved trapezoidal patch into two smaller patches leads to a
substantial improvement, with the post-splitting errors approaching the level
of machine precision.

This observation is consistent with the geometric discussion in
Section~\ref{sec:error-estimate}. A curved boundary patch is mapped to a
reference rectangle before the tensor-product LFE procedure is applied. If a
patch contains a relatively long curved side, the normalized boundary may
have a larger non-affine component, which can amplify the local
approximation error. After subdivision, each subpatch contains a shorter
portion of the curved boundary; the normalized curve is closer to a straight
segment, and the geometric contribution to the error constant is reduced.

\subsection{Smooth-cover correction for mildly rough boundary patches}
\label{subsec:smooth-cover-correction}

The preceding construction assumes that the curved side can be used directly
as a local boundary graph. We now describe a local correction for boundary
patches whose curved side contains small-scale rough perturbations. The
correction does not change the scan-based patch classification; it only
modifies the solver used for the affected boundary patches.

For clarity, we describe the procedure for a top-type patch
\[
    P=\{(x,y):x_L\le x\le x_R,\ y_B\le y\le b(x)\},
\]
where \(b(x)\) is the original rough boundary segment. Bottom patches are
handled by reversing the normal direction, and left or right patches are
handled by exchanging the roles of \(x\) and \(y\).

Since the boundary has already been localized by the patch partition, we
assume that the height variation of \(b(x)\) on \([x_L,x_R]\) is moderate.
Let \(p_d(x)\) be a low-degree polynomial fit, with \(d=3\) in the numerical
experiments, to boundary samples on this interval. We define a smooth cover
\[
    b_c(x)=p_d(x)+\delta,
    \qquad
    \delta=\max_i\{b(x_i)-p_d(x_i)\}+\delta_0,
\]
where \(\{x_i\}\) are sufficiently dense boundary samples and \(\delta_0>0\)
is a small safety margin. Then \(b_c(x)\) stays above the sampled rough
boundary. The computation is performed on the smooth covering patch
\[
    P_c=\{(x,y):x_L\le x\le x_R,\ y_B\le y\le b_c(x)\},
\]
but the final output is retained only in the original physical patch \(P\).

The values on the artificial boundary \(y=b_c(x)\) are not assumed to be
known. They are recovered column by column from the available data inside
\(P\). For a fixed vertical line \(x=x_k\), let
\[
    y_{k,1},\ldots,y_{k,m_k}
\]
be the available nodes in the original region, consisting of the uniform
background-grid nodes below the rough boundary together with the boundary
intersection \(b(x_k)\). We add one artificial node at
\[
    y_{k,m_k+1}=b_c(x_k).
\]
The corresponding data vector is
\[
    {\bf f}_k=
    \bigl(f(x_k,y_{k,1}),\ldots,f(x_k,y_{k,m_k}),\alpha_k\bigr)^T,
\]
where \(\alpha_k\approx f(x_k,b_c(x_k))\) is the only unknown.

Since the nodes \(y_{k,1},\ldots,y_{k,m_k},b_c(x_k)\) are generally
nonuniform, a one-dimensional Fourier extension matrix \(A_k\) is constructed
separately for this column. Let
\[
    A_k=U_k\Sigma_k V_k^*
\]
be its singular value decomposition, and let \(U_{k,0}\) contain the left
singular vectors corresponding to singular values below the truncation
threshold. A stable Fourier extension data vector should have a small
component in these discarded left-singular directions. Therefore the
artificial value is determined from the consistency condition
\[
    U_{k,0}^*{\bf f}_k\approx 0 .
\]
Writing
\[
    U_{k,0}
    =
    \begin{pmatrix}
    U_{k,0}^{\rm in}\\
    u_{k,0}^{\rm out}
    \end{pmatrix},
\]
where the last row corresponds to the artificial cover node, we compute
\[
    \alpha_k
    =
    \arg\min_{\alpha}
    \left\|
    (U_{k,0}^{\rm in})^*{\bf f}_{k}^{\rm in}
    +
    \overline{u_{k,0}^{\rm out}}\,\alpha
    \right\|_2 .
\]
Only one unknown is recovered on each sampling line. After this completion,
the column data are projected onto the stable Fourier extension space and
evaluated on the usual \(m\)-point target grid associated with \(P_c\). The
completed data are then processed by the standard top-type curved-patch solver
from Section~\ref{subsec:curved-vertical-transfer}, and the final values used
in the global approximation are restricted to \(P\). Thus the rough boundary
is used to determine the physical output region, while the smooth cover is
used only to provide a stable local computational patch.

\begin{figure}[H]
\centering
\includegraphics[width=0.95\textwidth]{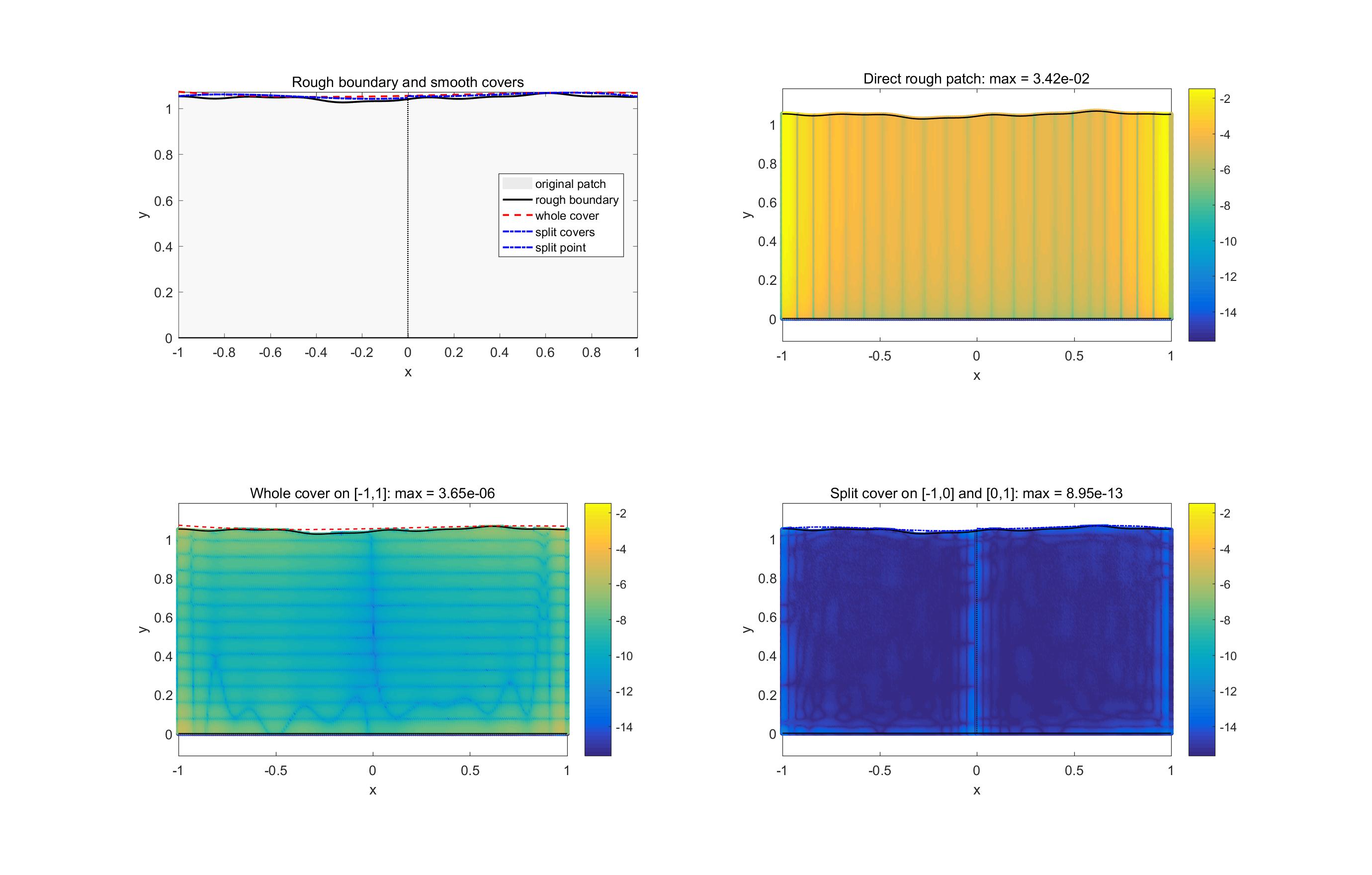}
\caption{Patch-level test of the smooth-cover correction on a mildly rough
top boundary. The three error plots correspond to direct treatment, one global
smooth cover, and two local smooth covers after subdivision.}
\label{fig:nonsmooth-cover}
\end{figure}

To verify the local correction mechanism, we consider a mildly rough top-type
patch
\[
    P=\{(x,y):-1\le x\le 1,\ 0\le y\le b(x)\}.
\]
Figure~\ref{fig:nonsmooth-cover} compares three treatments: direct use of the
rough boundary, one smooth cover on the whole interval \([-1,1]\), and two
local smooth covers recomputed on \([-1,0]\) and \([0,1]\). The maximum error
is reduced from \(3.42\times10^{-2}\) to \(3.65\times10^{-6}\), and further
to \(8.95\times10^{-13}\) after local subdivision.

This patch-level test is consistent with the subdivision effect observed in
Section~\ref{subsec:curved-trapezoid-splitting}. In both the smooth and mildly
rough cases, local subdivision reduces the difficulty of the boundary patch:
it decreases the geometric distortion for smooth curved sides and produces
closer smooth covers for rough boundary portions. This provides an additional
verification of the local patchwise principle underlying the proposed method.

\section{Patch construction for general domains and complexity analysis}
\label{sec:patch-construction}

\subsection{Scan-based partition into local patches}
\label{subsec:scan-partition}

We now describe how a general two-dimensional domain is decomposed into local
patches suitable for the tensor-product Fourier extension procedure. The
basic idea is to embed the domain into a Cartesian background grid, scan the
boundary curve once, and use the scanning result to construct rectangular
interior patches and one-side curved boundary patches.

Let \(\Omega\subset\mathbb{R}^2\) be a bounded domain enclosed by a closed
curve
\[
    \Gamma=\{(x(t),y(t)):0\le t<2\pi\}.
\]
Assume that
\[
    \Omega\subset [a,b]\times[c,d],
\]
and introduce a Cartesian partition
\[
    a=x_0<x_1<\cdots<x_{K_x}=b,\qquad
    c=y_0<y_1<\cdots<y_{K_y}=d.
\]
The corresponding background cells are denoted by
\[
    Q_{ij}=[x_{i-1},x_i]\times[y_{j-1},y_j],
    \qquad
    1\le i\le K_x,\quad 1\le j\le K_y .
\]

The boundary is scanned in a fixed orientation after computing its
intersections with the Cartesian grid lines. During this scan, each boundary
arc is assigned one of four local directional types
\[
    L,\qquad T,\qquad R,\qquad B,
\]
according to whether the boundary portion is locally represented as the left,
top, right, or bottom side of a computational patch. This classification is
local: the same directional type may appear in several separated portions of
a general boundary, and transitions between different types may occur more
than once.

Interior cells are treated as rectangular patches and are labeled as
\(\texttt{rect}\). Exterior regions are discarded. Boundary arcs generate
one-side curved trapezoidal patches. The four typical forms are
\[
\begin{aligned}
    \Omega_L &= \{(x,y): y_B\le y\le y_T,\ \gamma_L(y)\le x\le x_R\},\\
    \Omega_R &= \{(x,y): y_B\le y\le y_T,\ x_L\le x\le \gamma_R(y)\},\\
    \Omega_T &= \{(x,y): x_L\le x\le x_R,\ y_B\le y\le \gamma_T(x)\},\\
    \Omega_B &= \{(x,y): x_L\le x\le x_R,\ \gamma_B(x)\le y\le y_T\}.
\end{aligned}
\]
Here \(\gamma_L,\gamma_R,\gamma_T,\gamma_B\) are local representations of the
boundary curve. Each boundary patch has one curved side inherited from
\(\Gamma\) and three straight sides aligned with the Cartesian grid. This
form is convenient for the subsequent computation, since the patch can be
sampled along vertical or horizontal lines and converted to a tensor-product
data array.

Near transition portions of the boundary, additional covering patches are
introduced. These corner covers are not new approximation types; each of them
is converted into one of the four standard types \(L,T,R\), or \(B\),
according to the local sampling direction. Thus all boundary regions are
eventually handled by the same four curved-patch solvers.

Figure~\ref{fig:scan-to-block} illustrates the scan-based construction for a
smooth curved boundary. The left panel shows the boundary scan and the local
directional labels. The right panel shows the corresponding patch
classification: interior rectangles are labeled as \(\texttt{rect}\),
exterior regions are shown as the gray background, and boundary patches are
displayed as one-side curved trapezoidal patches. The red outlines indicate
the additional cover patches used near transition regions.

\begin{figure}[htbp]
\centering
\begin{subfigure}[b]{0.45\textwidth}
\centering
\includegraphics[width=\textwidth]{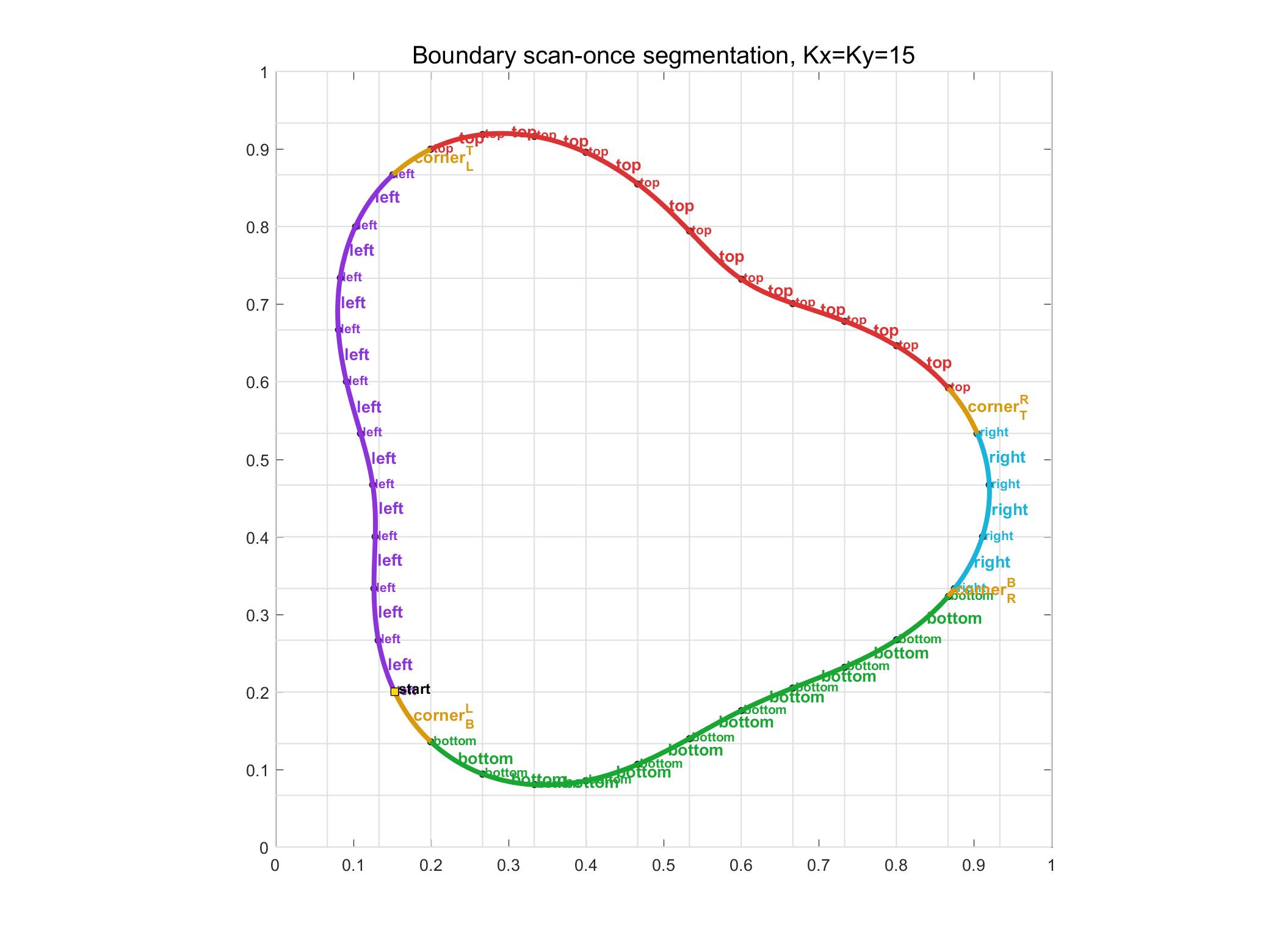}
\caption{Boundary scan and local directional labeling.}
\label{fig:boundary-scan-sub}
\end{subfigure}
\hfill
\begin{subfigure}[b]{0.45\textwidth}
\centering
\includegraphics[width=\textwidth]{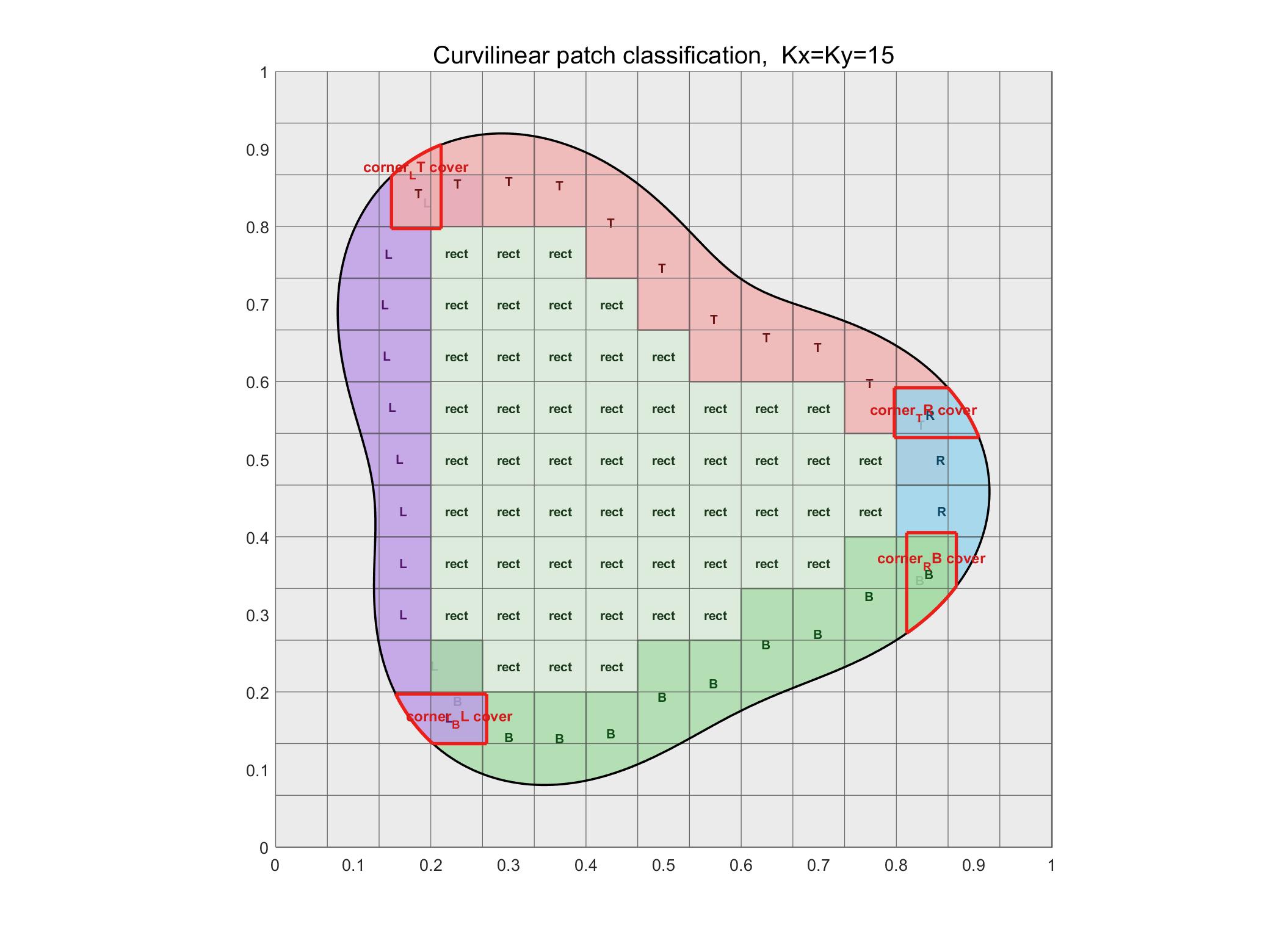}
\caption{Curvilinear patch classification and corner covers.}
\label{fig:block-classification-sub}
\end{subfigure}
\caption{Scan-based patch construction for a smooth curved boundary.}
\label{fig:scan-to-block}
\end{figure}

The same scan-based construction can also be applied to a boundary with
small-scale local perturbations. Figure~\ref{fig:rough-scan-to-block} shows a
representative example. Although the boundary contains visible local
oscillations, the scan still produces the same types of local objects:
rectangular interior patches, one-side boundary patches of type
\(L,T,R,B\), and corner covers. Therefore the roughness does not require a
new partition logic. It is handled at the boundary-patch solver level by the
smooth-cover correction described in
Section~\ref{subsec:smooth-cover-correction}.

\begin{figure}[htbp]
\centering
\begin{subfigure}[b]{0.45\textwidth}
\centering
\includegraphics[width=\textwidth]{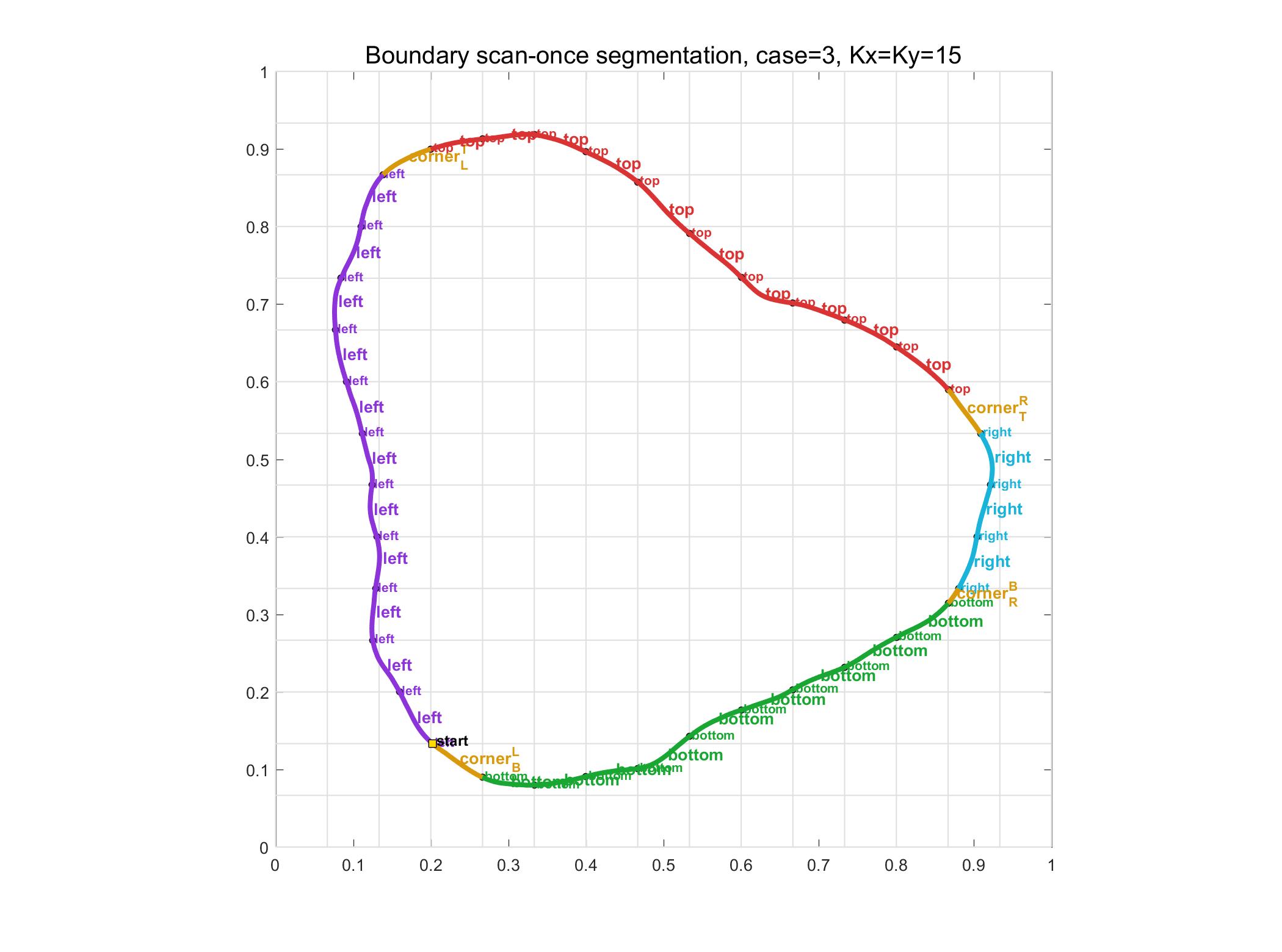}
\caption{Boundary scan and directional labeling for a boundary with
small-scale perturbations.}
\label{fig:rough-boundary-scan-sub}
\end{subfigure}
\hfill
\begin{subfigure}[b]{0.45\textwidth}
\centering
\includegraphics[width=\textwidth]{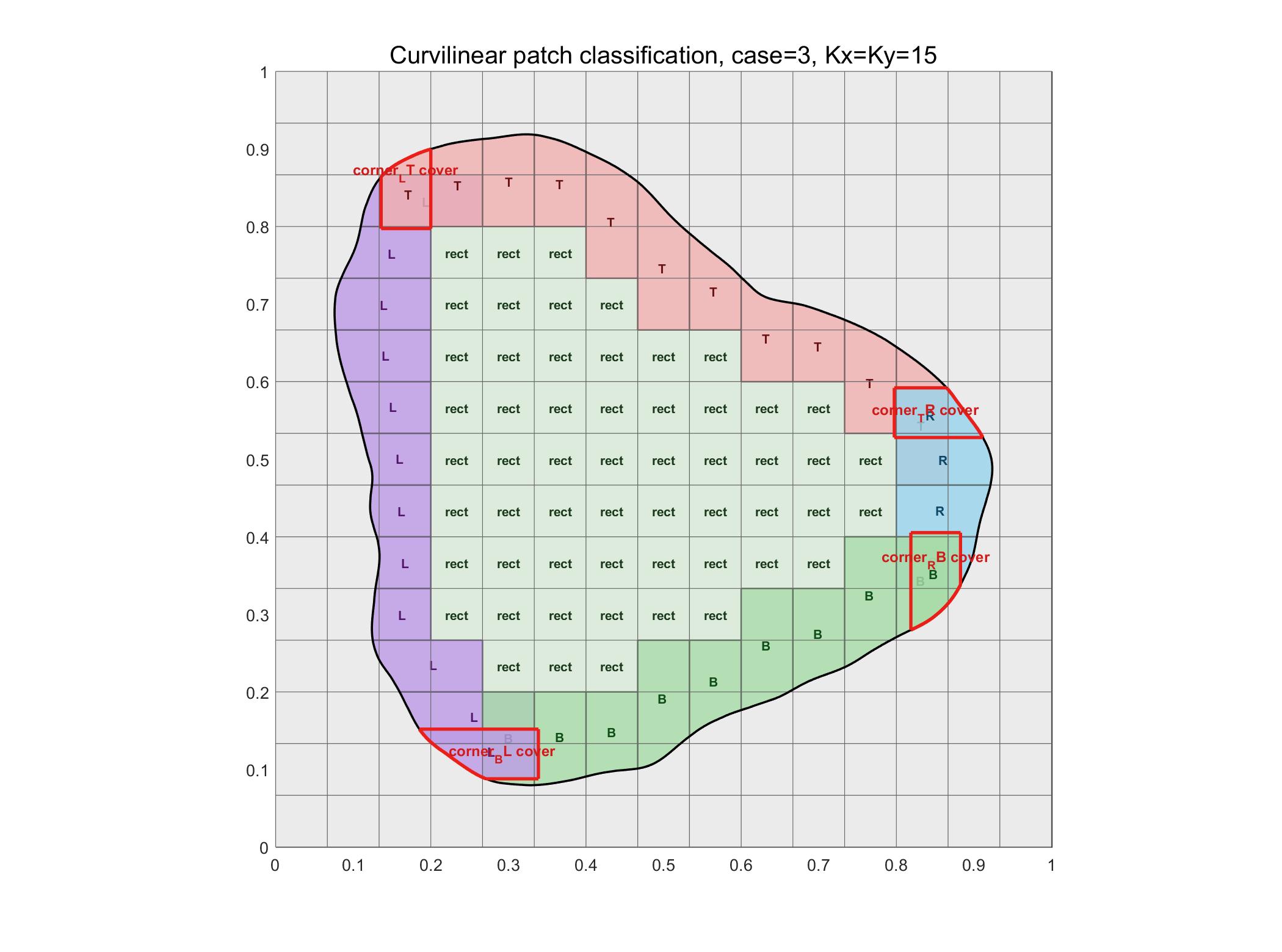}
\caption{Patch classification and corner covers for the perturbed boundary.}
\label{fig:rough-block-classification-sub}
\end{subfigure}
\caption{Scan-based patch construction for a mildly rough boundary obtained
by a small radial perturbation of a smooth reference curve. The same
classification mechanism is used as in the smooth case: interior rectangular
patches, one-side boundary patches, and corner covers are generated from the
boundary scan. }
\label{fig:rough-scan-to-block}
\end{figure}

\subsection{Computational complexity}
\label{subsec:complexity}

We briefly summarize the computational complexity of the proposed patchwise
local Fourier extension method. The detailed constructions on rectangular
patches, curved trapezoidal patches, and mildly rough boundary patches have
been described in the preceding sections. Here we emphasize the overall
scaling of the algorithm.

Let \(q=2N+1\) be the number of one-dimensional Fourier modes, \(m\) the
number of local sampling points in each coordinate direction, and
\[
    I=\#\{j:\sigma_j>\varepsilon\}
\]
the number of retained singular values in the TSVD procedure. For fixed
parameters \(m,N,T\) and a fixed refinement factor, these quantities are
moderate constants independent of the number of patches.

The offline part consists of constructing the reference one-dimensional
Fourier extension matrix and computing its truncated singular value
decomposition. Since the reference matrix has size \(m\times q\), this cost is
$O(mq^2+q^3),$
and is paid only once for a fixed set of local parameters. The same singular
systems are reused throughout the computation whenever the corresponding
local discretization is the same.

The geometric preprocessing includes the scan of the boundary curve, the
classification of local boundary directions, and the construction of the
patch database. If \(N_\Gamma\) denotes the number of discrete boundary
points and \(K_x,K_y\) are the numbers of background grid intervals, a direct
implementation costs $ O\bigl(N_\Gamma(K_x+K_y)+K_xK_y\bigr).$
This step is also performed only once for a fixed domain.

After preprocessing, the online approximation is local. Each interior
rectangular patch is handled by the tensor-product local Fourier extension
procedure. Each curved boundary patch is first converted to an \(m\times m\)
structured data array by one-dimensional LFE transfer along vertical or
horizontal sampling lines, and is then processed by the same tensor-product
procedure. For a mildly rough boundary patch, the smooth-cover correction
adds a one-unknown completion on each sampling line before the standard
curved-patch solver is applied. These boundary operations are more involved
than the rectangular-patch computation, but their sizes are still determined
only by the fixed local parameters.

Let \(N_p\) be the total number of computational patches. For fixed local
resolution, the cost of each local operation is uniformly bounded, and hence
the total online cost is
$ O(N_p).$ Since \(N_p=O(K_xK_y)\) for a fixed Cartesian background partition, and the
corresponding global fine-grid sizes satisfy
\[
    M_x=K_x(m-1)+1,\qquad
    M_y=K_y(m-1)+1,
\]
the total number of retained output points is of order
$ M_xM_y\sim K_xK_y m^2 .$
Thus, for fixed local resolution \(m\), the online computational cost is
essentially linear in the global number of retained output points.

The memory requirement is also local. The reference SVD factors are stored
once, and the patches can be processed independently. No global dense Fourier
extension matrix is formed. This locality is one of the main advantages of
the proposed method: the geometric complexity of the domain is confined to
the patch construction and local boundary transfers, while the algebraic
approximation step remains a collection of fixed-size tensor-product LFE
computations.

\section{Numerical experiments}
\label{sec:numerical}

In this section, we present numerical experiments to illustrate the accuracy
and robustness of the proposed patchwise local Fourier extension method. We
first test the method on a smooth curved domain, and then consider a domain
whose boundary contains small-scale rough perturbations. Unless otherwise
specified, the local Fourier extension parameters are fixed throughout this
section.

\subsection{Experimental setting}
\label{subsec:experimental-setting}

We first consider a curved domain \(\Omega\subset [0,1]^2\) enclosed by the
parametric boundary
\begin{equation}
\begin{aligned}
x(t) &= 0.50
      +0.38\cos t
      +0.06\cos(2t+0.6)
      -0.03\sin(3t),\\
y(t) &= 0.50
      +0.40\sin t
      -0.08\sin(2t-0.4)
      +0.03\cos(3t+0.2),
      \qquad 0\le t<2\pi .
\end{aligned}
\label{eq:test-boundary}
\end{equation}
The domain is embedded into a Cartesian background grid with
\(K_x=K_y=K\). The boundary is scanned once, and the scan is used to generate
rectangular interior patches, one-side curved boundary patches of type
\(L,T,R\), or \(B\), and additional corner-cover patches near transition
regions.

On each patch, \(m\times m\) local sampling values are used. The default
parameters are
\[
    T=4,\qquad \gamma_0=1.2,\qquad n=10,
\]
in both coordinate directions, so that
\[
    m=\lfloor \gamma_0(2n+1)\rfloor=25.
\]
The local approximation is evaluated on a refined grid with refinement factor
\(r=5\). The truncated singular value decomposition is used with a threshold
close to machine precision. When several patches generate the same physical
point in the assembled point cloud, the value computed first is retained.

For a given patch \(P\), let
\[
    \mathcal{G}_P=\{(x_\ell,y_\ell)\}_{\ell=1}^{N_P}
\]
be the refined physical evaluation grid associated with this patch, and let
\(f_h\) denote the local patchwise approximation. We define the patchwise
maximum error by
\[
    E_{\infty}(P)
    =
    \max_{1\le \ell\le N_P}
    |f_h(x_\ell,y_\ell)-f(x_\ell,y_\ell)|.
\]
For each patch type, the reported \(E_{\infty}^{\max}\) is the maximum of
\(E_\infty(P)\) over all patches of that type, while
\[
    E_{\infty}^{\rm avg}
    =
    \frac{1}{\#\mathcal P_{\rm type}}
    \sum_{P\in\mathcal P_{\rm type}}E_\infty(P)
\]
denotes the average of the patchwise maximum errors over all patches of that
type. For curved boundary patches, the refined grid follows the corresponding
curved side, so that the numerical approximation and the exact function are
compared on the same physical patch.

\subsection{Influence of the background partition on a smooth curved domain}
\label{subsec:partition-influence}

We first examine the influence of the background partition on the smooth
curved domain \eqref{eq:test-boundary}. The test function is
\[
    f(x,y)=\frac{\sin(xy)}{1+y^2}.
\]
This function is smooth and of low frequency on the computational domain, so
the test mainly reflects the effect of the curved boundary patches and the
scan-based partition.

Figure~\ref{fig:test1} shows the result for \(K_x=K_y=20\). The left panel
gives the exact function, the middle panel shows the assembled patchwise
approximation, and the right panel displays the pointwise error on a
logarithmic scale. The maximum pointwise error is about
\(5.795\times 10^{-12}\).

\begin{figure}[htbp]
\centering
\includegraphics[width=0.9\textwidth]{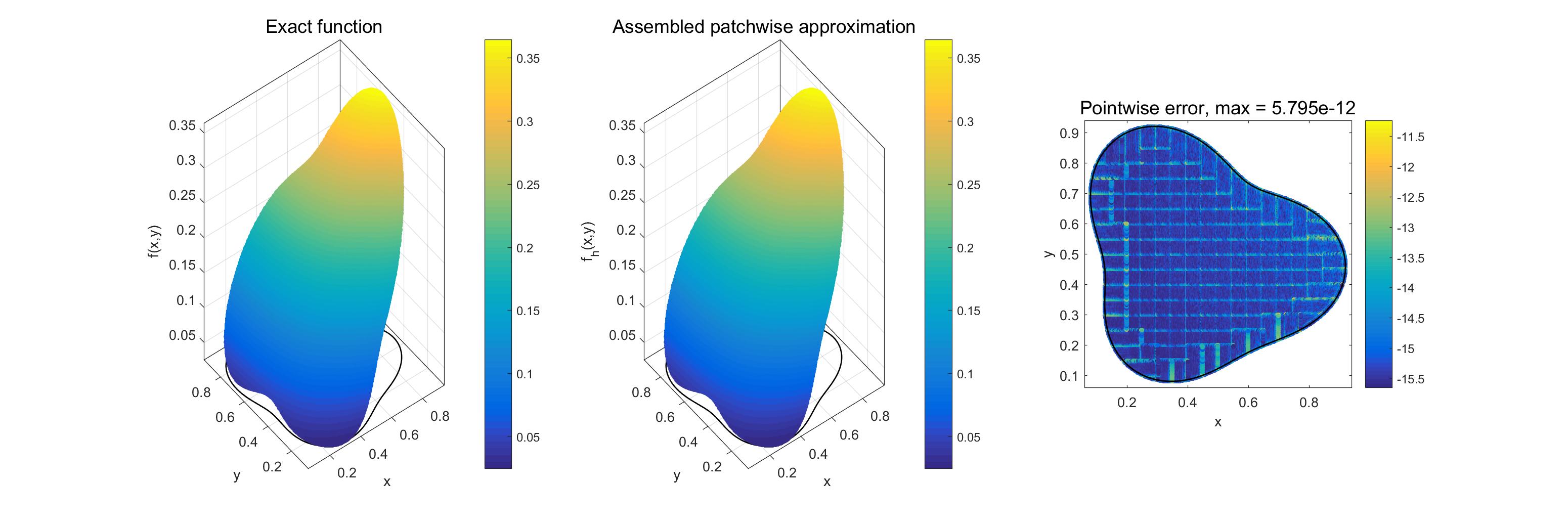}
\caption{Approximation of
\(f(x,y)=\sin(xy)/(1+y^2)\) on the smooth curved domain with
\(K_x=K_y=20\). Left: exact function. Middle: assembled patchwise
approximation. Right: pointwise error in logarithmic scale.}
\label{fig:test1}
\end{figure}

Table~\ref{tab:smooth-partition-errors} reports the patchwise maximum errors
and their averages for different background partitions. The number of patches
is also listed, since for fixed local parameters it is the main indicator of
the online cost.

The table shows that the rectangular patches already reach very high
accuracy for moderate partitions. For coarse partitions, the larger errors
are mainly associated with curved boundary patches, where a single patch may
contain a relatively long portion of the boundary. As the partition is
refined, the boundary portion represented by each patch becomes shorter and
the normalized geometric deformation is reduced. For \(K_x=K_y=20\), all
patch types reach an accuracy close to \(10^{-12}\) or better. In the
following experiments we therefore use \(K_x=K_y=20\) as the default
background partition.

\begin{table}[htbp]
	\begin{center}
		\caption{Errors for different background partitions for
        \(f(x,y)=\sin(xy)/(1+y^2)\) on the smooth curved domain.}
        \label{tab:smooth-partition-errors}
		\small\tabcolsep 0.4pt
		{\begin{tabular*}{\textwidth}{@{\extracolsep\fill}ccccccccccccc}
				\toprule
				\multirow{2}{*}{$K_x=K_y$}&\multirow{2}{*}{number of patches}
				&\multicolumn{2}{c}{\texttt{rect}}
				&\multicolumn{2}{c}{\texttt{left}}
                &\multicolumn{2}{c}{\texttt{right}}
                &\multicolumn{2}{c}{\texttt{top}}
                &\multicolumn{2}{c}{\texttt{bottom}}\\
				\cline{3-4} \cline{5-6}\cline{7-8}\cline{9-10}\cline{11-12}
				&&  $E_{\infty}^{\max}$& $E_{\infty}^{\rm avg}$
                &  $E_{\infty}^{\max}$& $E_{\infty}^{\rm avg}$
                &  $E_{\infty}^{\max}$& $E_{\infty}^{\rm avg}$
                &  $E_{\infty}^{\max}$& $E_{\infty}^{\rm avg}$
                &  $E_{\infty}^{\max}$& $E_{\infty}^{\rm avg}$\\
				\midrule
				5&14&--&--&3.91e-11&1.72e-11&2.81e-07&9.38e-08&1.42e-08&4.20e-09&6.48e-12&4.80e-12\\
                10&40&4.66e-13&2.04e-13&1.07e-09&1.36e-10&2.90e-12&2.59e-12&8.84e-10&1.27e-10&2.70e-12&1.42e-12\\
                15&92&7.40e-13&1.96e-13&2.28e-12&1.35e-12&3.05e-12&2.32e-12&4.29e-12&2.06e-12&2.92e-12&1.90e-12\\
                20&170&7.83e-13&1.91e-13&7.99e-13&3.95e-13&5.79e-12&3.16e-12&4.19e-12&1.59e-12&1.43e-12&7.08e-13\\
				\bottomrule
		\end{tabular*}}
	\end{center}
\end{table}

\subsection{Full-domain test with a mildly rough boundary}
\label{subsec:rough-boundary-test}

We next test the method on a domain whose boundary contains small-scale local
perturbations. To make the experiment reproducible, we explicitly describe
the construction of the perturbed boundary. We start from the smooth closed
reference curve
\[
    \rho_0(t)=1+0.18\cos(3t)-0.08\sin(2t),
\]
\[
    X_0(t)=\rho_0(t)\cos t,\qquad
    Y_0(t)=0.82\rho_0(t)\sin t .
\]
After affine rescaling of \((X_0,Y_0)\) into the box \([0.08,0.92]^2\), we
obtain a smooth curve \((x_0(t),y_0(t))\). The mildly rough boundary is then
defined by a small radial perturbation with respect to the center
\((0.5,0.5)\):
\[
    x_r(t)=0.5+(1+\rho_1(t))(x_0(t)-0.5),\qquad
    y_r(t)=0.5+(1+\rho_1(t))(y_0(t)-0.5),
\]
where
\[
\begin{aligned}
\rho_1(t)
={}&
0.012\,\operatorname{sgn}(\sin(12t+0.40))
|\sin(12t+0.40)|^{1.20} \\
&+
0.007\,\operatorname{sgn}(\cos(17t-0.20))
|\cos(17t-0.20)|^{1.20}
+
0.003\sin(25t+0.80).
\end{aligned}
\]
This construction preserves the overall shape of the smooth reference curve,
while introducing visible small-scale perturbations along the boundary.

The scan-based partition for this domain is shown in
Figure~\ref{fig:rough-scan-to-block}. The same patch classification is used
as in the smooth case, while the smooth-cover correction described in
Section~\ref{subsec:smooth-cover-correction} is applied to the boundary
patches of type \(L,T,R\), and \(B\).

We first use the same test function
\[
    f(x,y)=\frac{\sin(xy)}{1+y^2}.
\]
Table~\ref{tab:rough-partition-errors} reports the patchwise errors for
different background partitions.

\begin{table}[htbp]
	\begin{center}
		\caption{Errors for different background partitions for
        \(f(x,y)=\sin(xy)/(1+y^2)\) on a mildly rough boundary domain.}
        \label{tab:rough-partition-errors}
		\small\tabcolsep 0.4pt
		{\begin{tabular*}{\textwidth}{@{\extracolsep\fill}ccccccccccccc}
				\toprule
				\multirow{2}{*}{$K_x=K_y$}&\multirow{2}{*}{number of patches}
				&\multicolumn{2}{c}{\texttt{rect}}
				&\multicolumn{2}{c}{\texttt{left}}
                &\multicolumn{2}{c}{\texttt{right}}
                &\multicolumn{2}{c}{\texttt{top}}
                &\multicolumn{2}{c}{\texttt{bottom}}\\
				\cline{3-4} \cline{5-6}\cline{7-8}\cline{9-10}\cline{11-12}
				&&  $E_{\infty}^{\max}$& $E_{\infty}^{\rm avg}$
                &  $E_{\infty}^{\max}$& $E_{\infty}^{\rm avg}$
                &  $E_{\infty}^{\max}$& $E_{\infty}^{\rm avg}$
                &  $E_{\infty}^{\max}$& $E_{\infty}^{\rm avg}$
                &  $E_{\infty}^{\max}$& $E_{\infty}^{\rm avg}$\\
				\midrule
				5&14&--&--&5.34e-11&4.56e-11&7.34e-12&3.46e-12&2.50e-11&6.57e-12&1.90e-10&1.05e-10\\
                10&40&4.66e-13&2.04e-13&4.06e-11&1.48e-11&6.58e-12&4.09e-12&2.11e-12&1.13e-12&2.02e-10&7.64e-11\\
                15&93&7.40e-13&1.99e-13&2.27e-10&5.30e-11&4.06e-12&2.60e-12&6.69e-12&1.89e-12&1.03e-10&3.37e-11\\
                20&173&7.83e-13&1.90e-13&4.64e-12&1.34e-12&2.66e-10&7.44e-11&5.31e-11&2.08e-11&3.49e-12&2.04e-12\\
          	\bottomrule
		\end{tabular*}}
	\end{center}
\end{table}

With the smooth-cover correction, the boundary patch errors remain stable
despite the small-scale perturbations of the boundary. Compared with a direct
treatment of the perturbed boundary, the cover correction reduces the
sensitivity of boundary patches to local geometric oscillations. At the same
time, the partition size still affects the overall approximation through the
local frequency reduction discussed in
Remark~\ref{rem:local-frequency-reduction}. We therefore keep
\(K_x=K_y=20\) in the remaining tests on the rough boundary domain.

Figure~\ref{fig:rough-cover-comparison} compares the results obtained
without and with the smooth-cover correction for \(K_x=K_y=20\). Without the
cover correction, the maximum pointwise error is about
\(3.609\times 10^{-5}\), and the larger errors are concentrated near boundary
patches. After applying the local smooth-cover correction, the maximum
pointwise error is reduced to about \(1.298\times10^{-10}\).

Finally, we test the same rough boundary domain and the same cover-corrected
setting on four additional functions:
\[
\begin{aligned}
    f_1(x,y)&=\operatorname{erf}(20(x-y)),  & f_2(x,y)&=\frac{\log(10(x+y))}{\sqrt{x^2+y}},\\
    f_3(x,y)&=\sin\bigl(20(x^2+y^2)\bigr),    &f_4(x,y)&=\operatorname{Ai}(-15-13(x+y)),
\end{aligned}
\]
where \(\operatorname{Ai}\) denotes the Airy function. Figure~\ref{fig:rough_four_errors}
shows the corresponding results. The maximum pointwise errors are approximately
$
1.447\times 10^{-9},~
4.445\times 10^{-9},~
1.099\times 10^{-9},~
1.618\times 10^{-10},
$
respectively. These tests show that the same cover-corrected patchwise
framework remains accurate for functions with different local behaviors on
the mildly rough boundary domain.

\begin{figure}[htbp]
\centering
\begin{subfigure}[b]{0.9\textwidth}
\centering
\includegraphics[width=\textwidth]{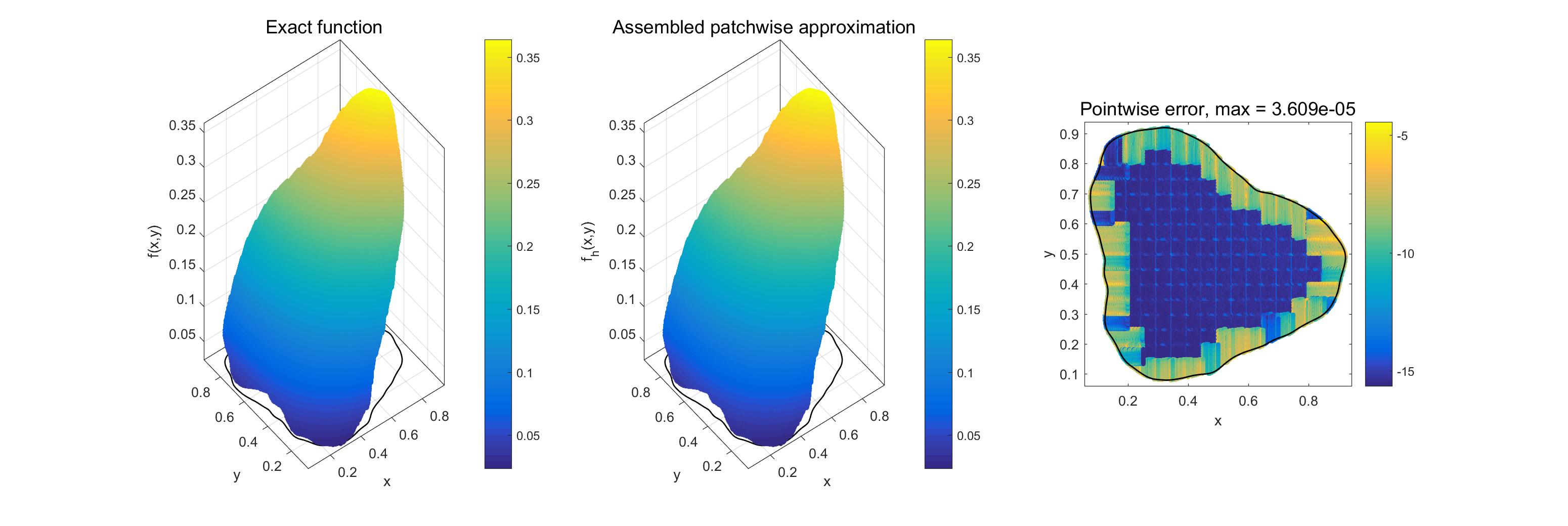}
\caption{Direct treatment of the mildly rough boundary without the
smooth-cover correction. The maximum pointwise error is
\(3.609\times10^{-5}\).}
\label{fig:rough-no-cover}
\end{subfigure}

\vspace{2mm}

\begin{subfigure}[b]{0.9\textwidth}
\centering
\includegraphics[width=\textwidth]{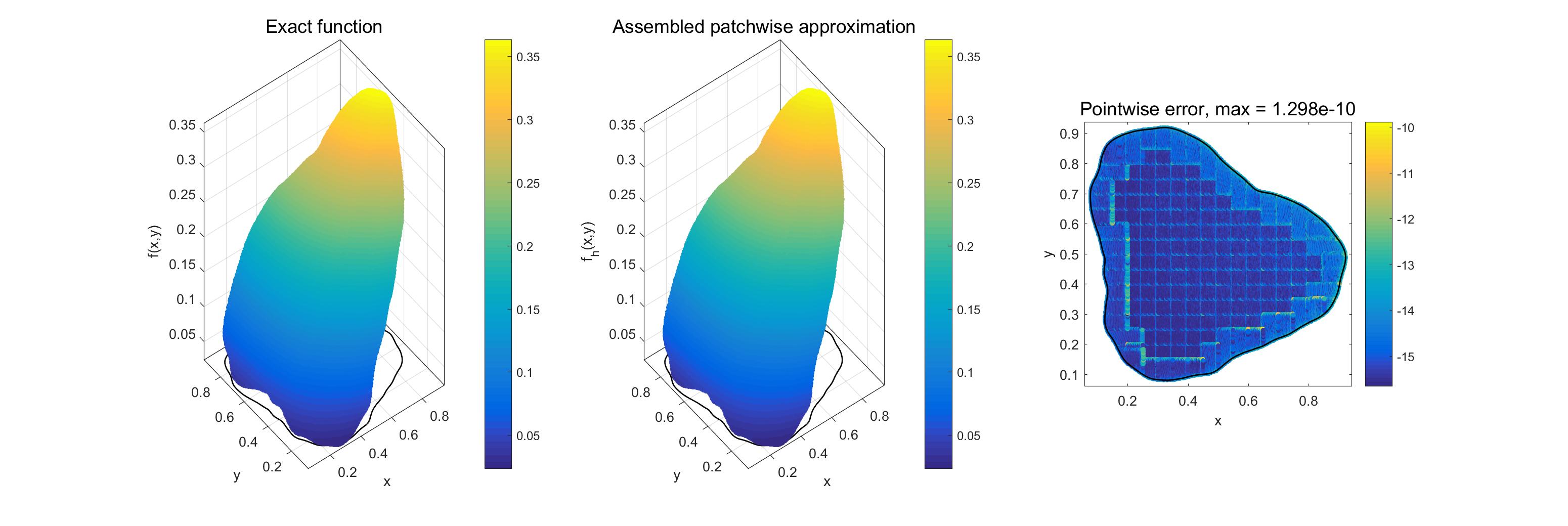}
\caption{Result obtained with the smooth-cover correction applied to
boundary patches. The maximum pointwise error is reduced to
\(1.298\times10^{-10}\).}
\label{fig:rough-with-cover}
\end{subfigure}

\caption{Effect of the smooth-cover correction on the mildly rough boundary domain for \(f(x,y)=\sin(xy)/(1+y^2)\) with \(K_x=K_y=20\).}
\label{fig:rough-cover-comparison}
\end{figure}

\begin{figure}[htbp]
\centering
\begin{subfigure}[b]{0.48\textwidth}
    \centering
    \includegraphics[width=\textwidth]{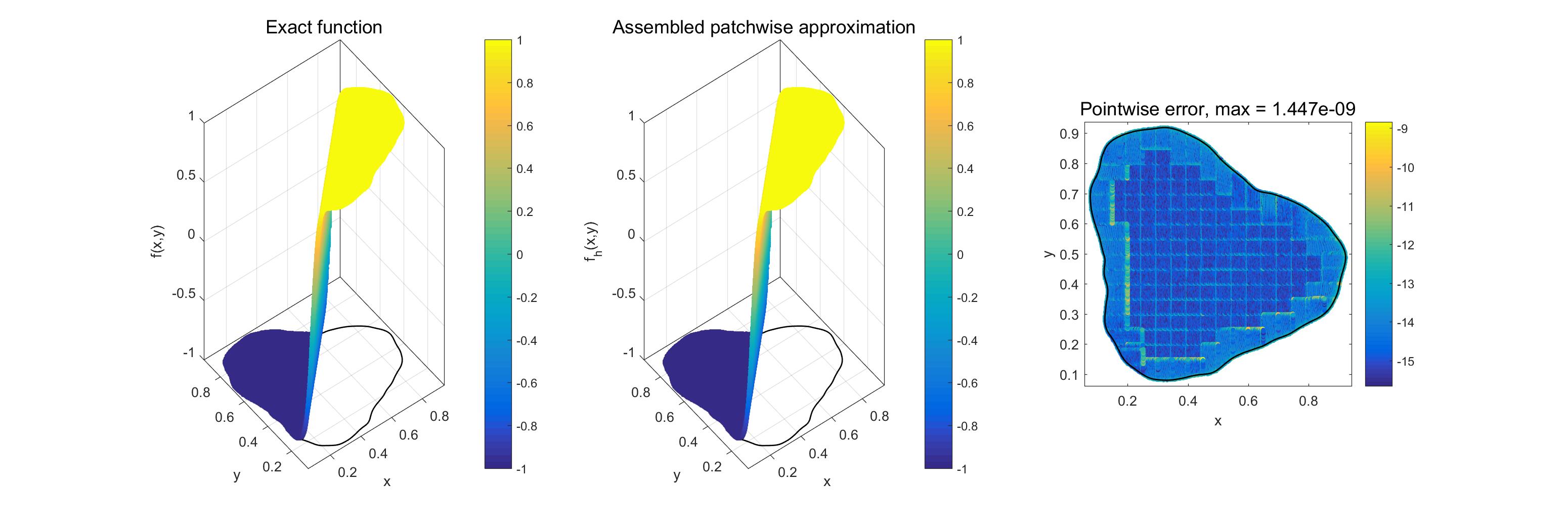}
    \caption{\(f_1(x,y)=\operatorname{erf}(20(x-y))\), max error \(=1.447\times10^{-9}\).}
    \label{fig:rough_error_erf}
\end{subfigure}
\hfill
\begin{subfigure}[b]{0.48\textwidth}
    \centering
    \includegraphics[width=\textwidth]{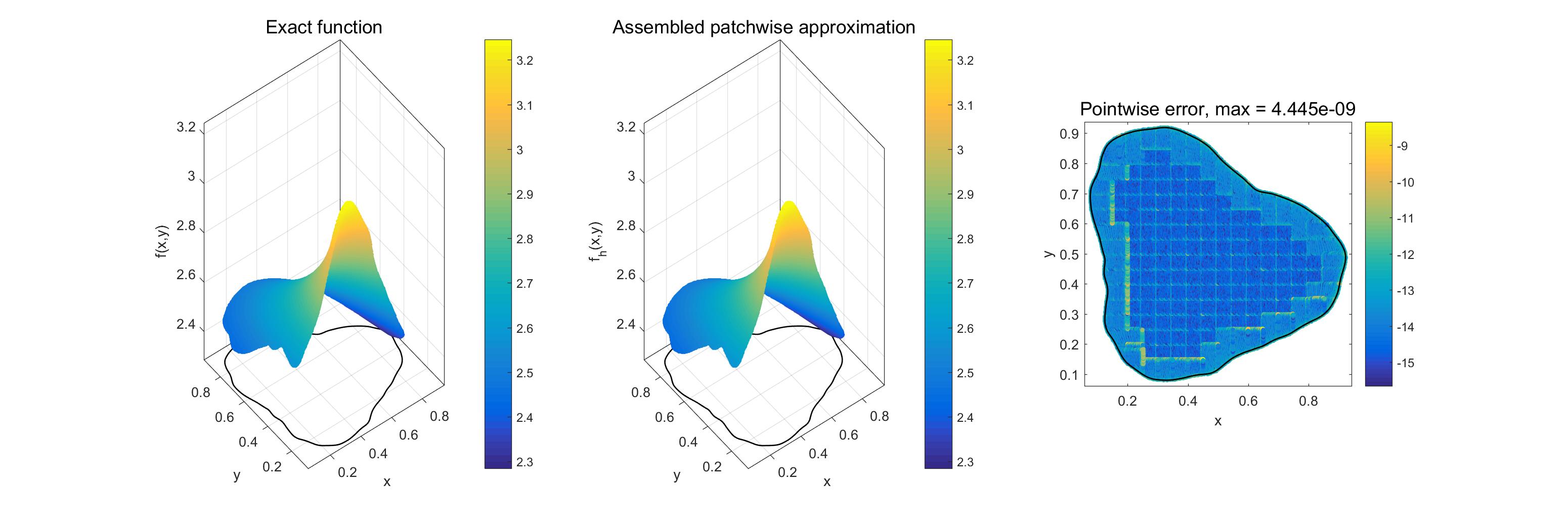}
    \caption{\(f_2(x,y)=\log(10(x+y))/\sqrt{x^2+y}\), max error \(=4.445\times10^{-9}\).}
    \label{fig:rough_error_log}
\end{subfigure}

\vspace{2mm}

\begin{subfigure}[b]{0.48\textwidth}
    \centering
    \includegraphics[width=\textwidth]{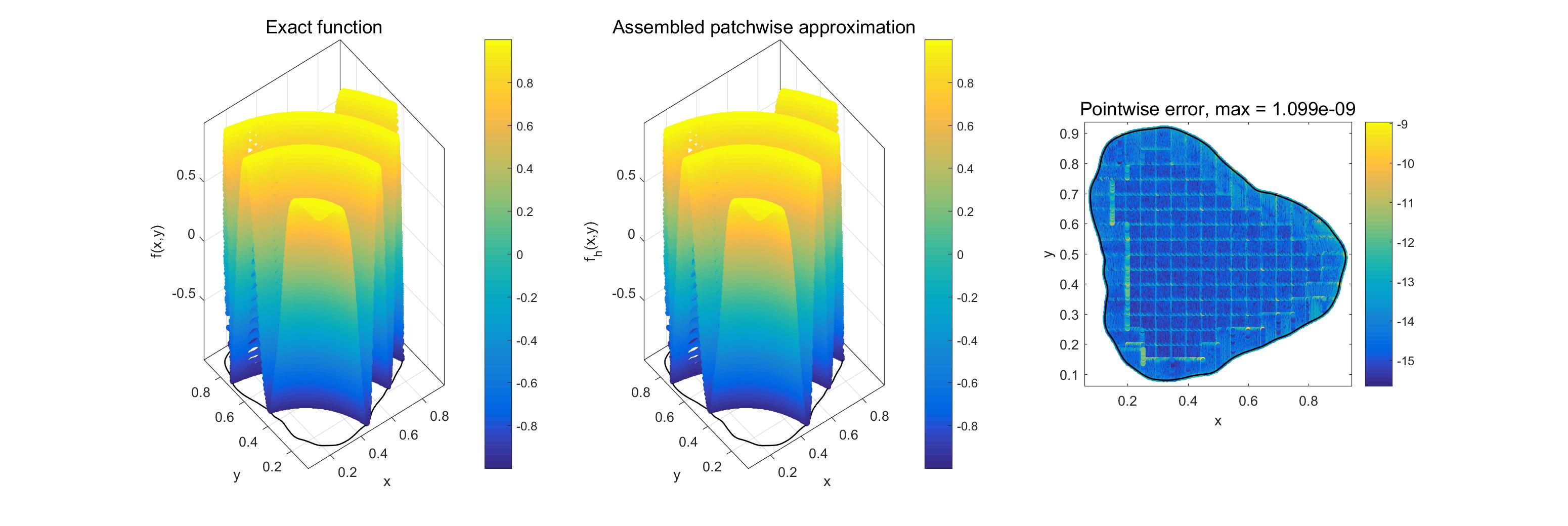}
    \caption{\(f_3(x,y)=\sin(20(x^2+y^2))\), max error \(=1.099\times10^{-9}\).}
    \label{fig:rough_error_radial}
\end{subfigure}
\hfill
\begin{subfigure}[b]{0.48\textwidth}
    \centering
    \includegraphics[width=\textwidth]{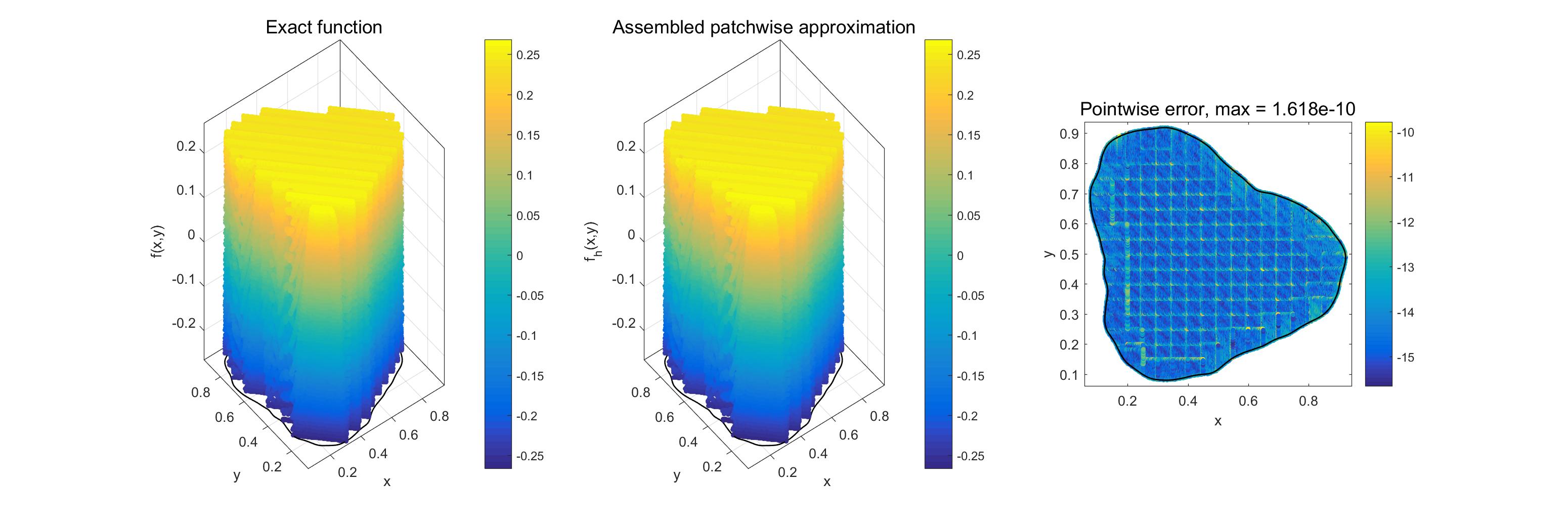}
    \caption{\(f_4(x,y)=\operatorname{Ai}(-15-13(x+y))\), max error \(=1.618\times10^{-10}\).}
    \label{fig:rough_error_airy}
\end{subfigure}

\caption{Pointwise error plots for four additional test functions on the mildly rough boundary domain with \(K_x=K_y=20\).}
\label{fig:rough_four_errors}
\end{figure}
The error plots also show that the relatively larger errors often appear near
patch interfaces or near boundary-related subregions. This is expected for a
non-overlapping patchwise assembly. A possible further improvement is to use overlapping patches and retain only
the central or better-conditioned part of each local approximation. Such an
overlap-and-restriction strategy may reduce interface effects, but we leave
this refinement for future work.

\subsection{A brief timing test}

To further illustrate the computational behavior of the proposed method, we
also report a simple timing test for the mildly rough boundary case. In this
experiment, we remove auxiliary post-processing steps such as refined-grid
evaluation and visualization, and only retain the essential geometric
preprocessing and patchwise approximation steps.
\begin{figure}[htbp]
\centering
\includegraphics[width=0.95\textwidth]{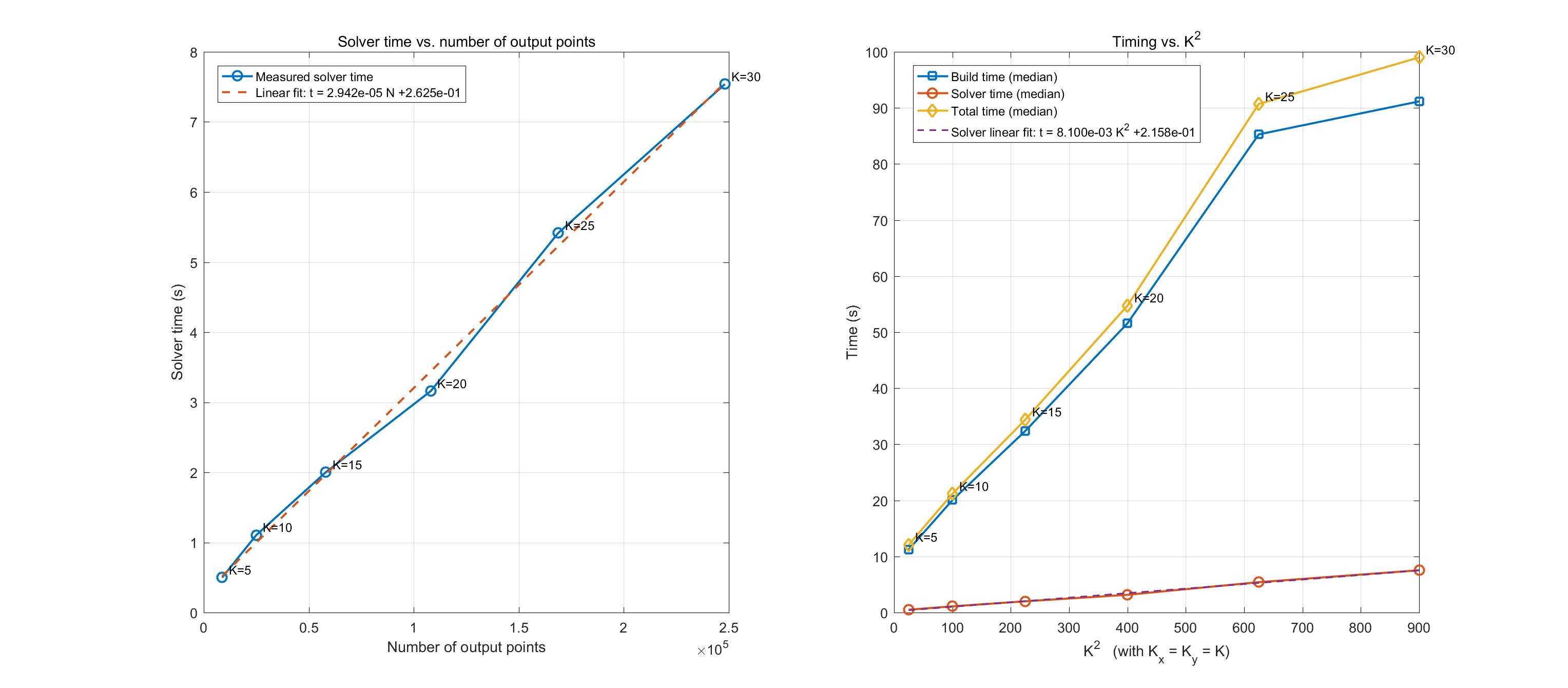}
\caption{Timing test for the patchwise solver on the mildly rough boundary domain. The left panel shows solver time versus the number of retained output points, and the right panel separates the build, solver, and total times versus \(K^2\).}
\label{fig:complexity-benchmark}
\end{figure}
Figure~\ref{fig:complexity-benchmark} reports the timing results. Since the
local parameters are fixed, the number of retained output points is
proportional to \(K_xK_y\); in the present tests \(K_x=K_y=K\), so the
expected linear online complexity corresponds to a nearly linear dependence
on \(K^2\). The left panel confirms that the solver time grows approximately
linearly with the number of retained output points. The right panel separates
the build, solver, and total times. The build stage is more expensive in this
implementation because it includes the geometric preprocessing, especially
the computation of intersections between the boundary curve and Cartesian
grid lines. This preprocessing is performed only once for a fixed domain and
can be reused when approximating different functions on the same geometry.

\section{Conclusions and remarks}
\label{sec:conclusions}

We have presented a patchwise local Fourier extension method for function
approximation on two-dimensional domains with curved boundaries. The method
embeds the physical domain into a Cartesian background grid and decomposes it
into rectangular interior patches and one-side curved boundary patches. After
local data transfer, each patch is represented by a fixed-size tensor-product
array and approximated by a TSVD-stabilized local Fourier extension procedure.
This construction avoids forming a single global Fourier frame system and
localizes both the geometric treatment and the ill-conditioned extension step.

For fixed local parameters, the local algebraic systems have bounded size and
the reference one-dimensional SVDs can be precomputed and reused. Boundary
patches require additional line transfers, and mildly rough boundary patches
also require a one-unknown completion on each sampling line, but these steps
remain local. The timing test confirms that, after the geometric
preprocessing stage, the online solver time grows approximately linearly with
the number of retained output points. The build stage is more dependent on the
boundary representation, since in the current implementation its main cost is
the computation of boundary--grid intersections.

The numerical experiments show that the method achieves high accuracy on
smooth curved domains with a fixed set of parameters. They also demonstrate
that the smooth-cover correction can substantially reduce boundary-induced
errors for mildly rough boundary patches without changing the scan-based
partition logic. The larger errors observed in some tests are mainly located
near patch interfaces or boundary-related subregions, which is consistent
with the non-overlapping assembly used in the present implementation.

Several extensions are natural. Adaptive refinement could be used to refine
only patches with large local frequency, geometric distortion, or
cover-correction residuals. An overlap-and-restriction assembly may further
reduce interface effects by retaining only the central or better-conditioned
part of each local approximation. A systematic treatment of more general
nonsmooth domains would require automatic boundary segmentation, robust cover
construction, and a stability analysis of the artificial boundary completion.
Finally, the patchwise LFE representation is a promising ingredient for
high-order PDE solvers on curved domains, either through alternating-direction
or domain-decomposition strategies based on one-dimensional LFE operators, or
through point-driven local collocation schemes.
\section*{Acknowledgments}The research is partially supported by National Natural Science Foundation of China (Nos. 42450232, 12171455).

\end{document}